\documentclass[10pt, english]{amsart}

\usepackage{amsmath,amssymb,enumerate}

\usepackage{mathrsfs}

\usepackage{todonotes} 
\usepackage[T1]{fontenc}
\usepackage[all,cmtip]{xy}

\usepackage{babel}
\usepackage{amstext}
\usepackage{amsfonts}
\usepackage{latexsym}
\usepackage{ifthen}
\usepackage{tikz-cd}
\usepackage{csquotes}

\makeatletter
\@namedef{subjclassname@2020}{\textup{2020} Mathematics Subject Classification}
\makeatother

\newcommand{\sK}{{\mathcal K}}

\newtheorem{lemma1}{}[section]

\newenvironment{lemma}{\begin{lemma1}{\bf Lemma.}}{\end{lemma1}}
\newenvironment{example}{\begin{lemma1}{\bf Example.}\rm}{\end{lemma1}}

\newenvironment{theorem}{\begin{lemma1}{\bf Theorem.}}{\end{lemma1}}

\newenvironment{proposition}{\begin{lemma1}{\bf Proposition.}}{\end{lemma1}}
\newenvironment{corollary}{\begin{lemma1}{\bf Corollary.}}{\end{lemma1}}
\newenvironment{remark}{\begin{lemma1}{\bf Remark.}\rm}{\end{lemma1}}

\newenvironment{definition}{\begin{lemma1}{\bf Definition.}}{\end{lemma1}}
\newenvironment{notation}{\begin{lemma1}{\bf Notation.}}{\end{lemma1}}

\newenvironment{setup}{\begin{lemma1}{\bf Setup.} \rm }{\end{lemma1}}

\newenvironment{conjecture}{\begin {lemma1}{\bf Conjecture.}}{\end{lemma1}}

\newenvironment{fact}{\begin{lemma1}{\bf Fact.}\rm}{\end{lemma1}}

\newenvironment{remark*}{{\bf Remark.}}{}
\newenvironment{remarks*}{{\bf Remarks.}}{}
\newenvironment{example*}{{\bf Example.}}{}
\newenvironment{assumption*}{{\bf Assumption.}}{}

\newcommand{\R}{\ensuremath{\mathbb{R}}}

\newcommand{\Z}{\ensuremath{\mathbb{Z}}}
\newcommand{\C}{\ensuremath{\mathbb{C}}}
\newcommand{\N}{\ensuremath{\mathbb{N}}}
\newcommand{\PP}{\ensuremath{\mathbb{P}}}

\usepackage{eurosym}

\newcommand{\holom}[3]{\ensuremath{#1:#2  \rightarrow #3}}
\newcommand{\fibre}[2]{\ensuremath{#1^{-1} (#2)}}

\makeatletter
\ifnum\@ptsize=0  \fi \ifnum\@ptsize=2  \fi 
\makeatother

\newcommand\sE{{\mathcal E}}

\newcommand\sF{{\mathcal F}}
\newcommand\sG{{\mathcal G}}

\newcommand\sO{{\mathcal O}}

\DeclareMathOperator*{\pic}{Pic}

\usepackage{xcolor}
\usepackage{hyperref}

\author{Francesco Antonio Denisi}
\author{Andreas H\"oring}

\address{Andreas H\"oring, Universit\'e C\^ote d'Azur, CNRS, LJAD, France}
\email{Andreas.Hoering@univ-cotedazur.fr}

\address{Francesco Antonio Denisi, Fachrichtung Mathematik, Campus, Gebäude E2 4, Universität des Saarlandes, 66123 Saarbrücken, Germany}
\email{denisi@math.uni-sb.de}

\subjclass[2020]{32Q28, 32J27, 14R10, 14E30}
\keywords{Stein manifold, tangent bundle, canonical extension}

\title{Canonical extensions of ruled surfaces over elliptic curves} 
\date{October 2nd, 2026} 

\begin{document}

\begin{abstract}
Let $L$ be a line bundle of negative degree on an elliptic curve $E$. While  the ruled surface $M := \mathbb P(\sO_E \oplus L)$ is one of the simplest varieties in algebraic geometry, its tangent bundle $T_M$ is surprisingly complex: we first show that the ring of symmetric tensors is not finitely generated. We then construct a holomorphic function on the canonical extension $Z_M$ which allows us to show that $Z_M$ is not Stein.
\end{abstract}

\maketitle


\section{Introduction}

\subsection{Motivation and main result}

Given a projective manifold $M$ there are different ways to measure the positivity of its tangent bundle: from a differential-geometric point of view we can ask for the existence of metrics with (semi)positive (bi)sectional curvature (e.g. \cite{Mok88, Mat22}), while MMP suggest to study manifolds
such that the tautological bundle $\sO_{\PP(T_M)}(1)$   
on the Grothendieck projectivisation $\PP(T_M)$
is nef or pseudoeffective (e.g.\ \cite{DPS94, HLS22}). The most geometric measure is given by the
algebra of symmetric tensors
$$
R(M, T_M) := \bigoplus_{k \in \N} H^0(M, S^k T_M).
$$
Very recently Beauville and Liu \cite{BL26} studied the properties
of $R(M, T_M)$, they also gave an explicit  description for several classical varieties. For example if $M=\PP(\sE) \rightarrow C$ is a ruled surface
over a curve of genus at least two, the algebra $R(M, T_M)$ strongly depends on the stability properties of $\sE$ and its symmetric powers \cite{BL26}. 

The ring $R(M, T_M)$ is isomorphic to the ring of regular functions
on the total space of the cotangent bundle $\Omega_M$, a symplectic manifold that deforms to the 
{\it canonical extension $Z_M$} studied by Greb and Wong \cite{GW20}:
let $\omega \in H^1(M, \Omega_M) \simeq \mbox{Ext}^1(T_M, \sO_M)$ be a K\"ahler class. This defines an extension of vector bundles, 
$$
0 \to \sO_M \to V^{\omega} \to T_M \to 0 
$$
and therefore an embedding $\mathbb P(T_M) \subset \mathbb P(V^{\omega})$. Then 
$$
Z_M := Z_M^\omega :=   \mathbb P(V^{\omega}) \setminus \mathbb P(T_M)
$$
is an affine bundle over $M$ modelled on $\Omega_M$. 
It is expected that the geometry of the complex manifold $Z_M$ strongly reflects the positivity of the tangent bundle:

\begin{conjecture} \label{conjecture-GW} \cite{GW20} \cite[Conj.1.1]{HP24}
Let $M$ be a projective manifold, and let $Z_M$ be a canonical extension defined by some K\"ahler class on $M$. Then 
$Z_M$ is Stein if and only if $T_M$ is nef.
\end{conjecture}

Apart from some special cases \cite{GW20, HP24, HP24b, Mue25} this conjecture is wide open.
It is known for surfaces except for ruled surfaces over elliptic curves. This seemingly innocuous case is the object of this paper:

\begin{theorem} \label{thm:not-Stein}
Let $E$ be an elliptic curve, and $L$ a line bundle of negative degree over $E$. Set $M:=\PP(\sO_E\oplus L)\to E$. 
Consider the canonical extension 
$$
0 \to \sO_M \to V^{\omega} \to T_M \to 0
$$
associated with any K\"ahler class $\omega\in H^1(M,\Omega^1_M)$. Then the complement $\PP(V^{\omega})\setminus \PP(T_M)$ is not Stein.
\end{theorem}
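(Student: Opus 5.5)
Since the abstract announces a holomorphic function on $Z_M$, the plan is to show that $Z_M$ is not holomorphically convex. Concretely, I will exhibit a curve $C\subset M$ such that the restriction $Z_M|_C$ contains a noncompact closed analytic subset on which every holomorphic function of $Z_M$ is constant. A cheaper alternative would be a holomorphically convex but non-Stein picture, namely a positive-dimensional compact subvariety; but $Z_M$ is an affine bundle, so it contains no compact curves in fibres. The real obstruction should therefore come from the failure of holomorphic functions to separate points.

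The natural candidate is the negative section $C_0\cong E$, with $C_0^2=\deg L<0$. The key steps are as follows.

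\textbf{Step 1.} I restrict the extension $0\to\sO\to V^\omega\to T_M\to 0$ to $C_0$. Using $T_M|_{C_0}\cong T_{C_0}\oplus N_{C_0}\cong \sO_E\oplus L$ (the relative tangent bundle restricted to the section is $L$ up to sign conventions), I compute the restricted extension class. The component in $H^1(E,\sO_E)$ is $\omega|_{C_0}\neq 0$, because $\omega$ is Kähler. This gives the Atiyah-type nonsplit extension $0\to\sO_E\to F_2\to T_E\to0$ on the $\sO\oplus T_E$ part.

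\textbf{Step 2.} I then consider the sub-affine bundle $Z_E\subset Z_M|_{C_0}$ corresponding to $\PP(F_2)\setminus\PP(T_E)$. This is the classical Serre example $\C^*\times\C^*$-type surface, which is Stein. So this locus alone gives nothing, and I also need the normal direction $L$.

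\textbf{Step 3.} For the normal direction, I use that $R(M,T_M)$ is not finitely generated, together with the graded structure: functions on $Z_M$ are filtered with associated graded inside $R(M,T_M)$. Restricting to $C_0$, sections of $S^kT_M|_{C_0}=\bigoplus_{i} L^{i}$ that come from $M$ must vanish along the $L$-directions, since $\deg L<0$ forces $H^0(E,L^i)=0$ for $i>0$. I will show that every holomorphic function on $Z_M$, restricted to the fibres of $Z_M|_{C_0}\to \PP(V|_{C_0})\setminus\ldots$, is independent of the $L^\vee$-coordinate on $\Omega_M|_{C_0}$. This produces a one-dimensional affine line in each fibre on which all holomorphic functions are constant, so $\mathcal O(Z_M)$ does not separate points and $Z_M$ is not Stein.

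The main obstacle is Step 3: passing from the graded statement (no sections of positive $L$-weight on $C_0$) to actual holomorphic functions. A function on $Z_M$ is a compatible system of sections of $S^kV^\omega$, and the extension could mix weights. I would handle this by a weight argument using the $\C^*$-action fibrewise, or by the explicit function the paper constructs. Failing that, I would use a Hartogs/Levi-type argument. Since $C_0$ is negative, a neighbourhood of $Z_M|_{C_0}$ behaves like a pseudoconcave region, and bounded plurisubharmonic exhaustion fails along the $L^\vee$ directions, contradicting the existence of a strictly psh exhaustion on a Stein manifold.
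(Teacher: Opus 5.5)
The statement you aim for in Step~3 is true, but your proposal does not prove it, and Step~3 carries the entire content of the theorem. Steps~1 and~2 are correct: $V^\omega|_{C_-}\simeq\mathscr F_2\oplus L$ because $H^1(E,L^*)=0$, where your $C_0$ is the paper's $C_-$. But they contribute nothing to the conclusion.

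\textbf{The graded argument does not reach holomorphic functions.} It only controls functions that are polynomial along the fibres of $Z_M\to M$, i.e.\ the algebra $\bigcup_k H^0(M,S^kV^\omega)$. You give no reason why arbitrary holomorphic functions on $Z_M$ should be governed by these, and in general they are not. On the Stein surface $Z_E\subset Z_M|_{C_-}$ of your Step~2, the fibrewise polynomial functions are only the constants, since $H^0(E,S^k\mathscr F_2)=\C$. Yet $\mathcal O(Z_E)\simeq\mathcal O(\C^*\times\C^*)$ is huge. The non-finite generation of $R(M,T_M)$ plays no role.

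\textbf{The fallbacks do not work as stated.} There is no fibrewise $\C^*$-action by homotheties on $Z_M\to M$. Its fixed locus would be a holomorphic section of the affine bundle, i.e.\ a splitting of the extension, which is impossible since $\omega\neq0$. A scaling of the $L^*$-direction exists only on $Z_M|_{C_-}\simeq\mathrm{Tot}(\pi_E^*L^*)$, with $\pi_E\colon Z_E\to E$.

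The pseudoconcavity claim is also unsubstantiated. $Z_M|_{C_-}$ is itself Stein. Its functions of fibre degree $k$ are the sections of $\pi_E^*L^k$ over the Stein manifold $Z_E$, so there are plenty of functions there depending on your $L^\vee$-coordinate. Whether they survive is decided by which functions extend to all of $Z_M$. That is a global question your sketch never addresses.

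\textbf{What is missing.} The missing input is exactly the paper's: the model $\tilde Z\simeq\C_z\times\C_u\times Q$ with its explicit $\Lambda$-action, and the level set $\mu^{-1}(c_0)$ on which the translation in $u$ disappears. With these, your route can be completed as follows.

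The torus action $c\cdot([x_0:x_1],[w_0:w_1])=([x_0:cx_1],[w_0:cw_1])$ commutes with $\Lambda$ and preserves $\mu$. It therefore descends to a $\C^*$-action on $Z$ lifting the scaling of $\mathrm{Tot}(L^*)\subset M$; this action is not fibrewise over $M$. Under it, the coordinate $y_\infty$ on $\pi^{-1}(C_-)$ (your $L^\vee$-coordinate) has weight one.

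Decompose $h\in\mathcal O(Z)$ into weight components $h_k$. On $\mu^{-1}(c_0)$ one has $h_k=t^kg(z,u)$, with $g(\cdot,u)$ a section of $L^k$ on $E$, so $h_k$ vanishes there for $k>0$. The function $\mu-c_0$ is invariant and vanishes to first order along this level set. Dividing by it repeatedly therefore gives $h_k\equiv0$ for $k>0$.

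On $\pi^{-1}(C_-)$ a weight-$k$ function is homogeneous of degree $k$ in $y_\infty$, so it vanishes there for $k<0$. Hence $h|_{\pi^{-1}(C_-)}=h_0|_{\pi^{-1}(C_-)}$, which is independent of $y_\infty$. This proves your Step~3.

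However, this argument uses the same computation and the same special fibre as the paper. Once $\mu^{-1}(c_0)\simeq\C\times(\mathrm{Tot}(L^*)\setminus C_+)$ is known, the paper's conclusion is shorter: this is a closed submanifold that is not Stein, by the one-point compactification argument. For $\deg L<-1$ you would also need the étale reduction to degree $-1$, or a redo of the computation.
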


Combined with \cite[Theorem 1.13]{HP24} and \cite[Lemma 0.3]{Mue25} this yields the following corollary.

\begin{corollary}
Conjecture \ref{conjecture-GW} holds if $\dim M=2$.
\end{corollary}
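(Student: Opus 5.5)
Since the statement immediately above is the Corollary, the argument is a case analysis on surfaces using Theorem \ref{thm:not-Stein} together with the two cited results. Let $M$ be a projective surface. I would first sort the surfaces with $T_M$ nef and check that their canonical extensions are Stein, and then treat the surfaces with $T_M$ not nef and show the canonical extension is not Stein.

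By the Campana--Peternell / Demailly--Peternell--Schneider classification of surfaces with nef tangent bundle, $T_M$ nef holds exactly in the following cases: $\PP^2$, $\PP^1\times\PP^1$, abelian surfaces and hyperelliptic surfaces (up to finite étale covers), and ruled surfaces $\PP(\sE)\to E$ over an elliptic curve with $\sE$ semistable. For these, the Steinness of $Z_M$ is exactly what \cite[Theorem 1.13]{HP24} and \cite[Lemma 0.3]{Mue25} provide. I would quote them in the form "if $T_M$ is nef then $Z_M$ is Stein for surfaces", and "if $M$ is a surface not of the form $\PP(\sE)\to E$ with $\sE$ unstable and $T_M$ is not nef, then $Z_M$ is not Stein". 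This second statement uses, for example, rational curves with negative tangent directions, or $M$ not being of the required special type.

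The remaining case is $T_M$ not nef and $M=\PP(\sE)$ over an elliptic curve with $\sE$ unstable. After twisting we may write $\sE=\sO_E\oplus L$ with $\deg L<0$: an unstable rank-two bundle on an elliptic curve splits as an extension of line bundles of different degrees, and such an extension splits since $\mathrm{Ext}^1$ of the larger-degree piece by the smaller vanishes. In this case Theorem \ref{thm:not-Stein} shows that $Z_M$ is not Stein for every Kähler class. Combining both directions gives the equivalence.

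The main obstacle is bookkeeping rather than mathematics. I must verify that the cited results cover precisely the complementary cases, and in particular that the unstable elliptic ruled surfaces were the only surfaces left open, as the introduction asserts. I would also check that "any Kähler class" in Conjecture \ref{conjecture-GW} matches the quantifiers of the cited results.
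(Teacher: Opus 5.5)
This is the paper's argument: the corollary comes from combining \cite[Theorem 1.13]{HP24} and \cite[Lemma 0.3]{Mue25}, which leave open only the surfaces $\PP(\sO_E\oplus L)$ with $\deg L<0$ over an elliptic curve, with Theorem~\ref{thm:not-Stein} for exactly that case. One small slip in your normalization step: the group that vanishes is $\mathrm{Ext}^1(\sE/A,A)=H^1(E,A\otimes(\sE/A)^*)$ for the destabilizing subbundle $A$ of larger degree, whereas the extension group with the roles reversed is nonzero (the indecomposable degree-one bundle is a non-split extension of line bundles of different degrees), so you must use that the subbundle is the piece of larger degree.
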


\subsection{Strategy of proof}

Let $E$ be an elliptic curve, and $L$ a line bundle of negative degree over $E$.  The ruled surface $M:=\PP(\sO_E\oplus L)\to E$
contains a unique curve $C_-$ with negative self-intersection, and it is not difficult to see that for any curve $C \subset M$
the restriction $T_M|_C$ is nef if and only if $C \neq C_-$. Thus $T_M$ is almost nef in the sense of \cite{DPS01} and we can expect the ring $R(M, T_M)$ to be non-trivial. In fact it is not difficult to show that $T_M$ and 
$V^\omega$ are big vector bundles (cf. Remark \ref{remark:extensionbig}). 
This suggests that one can use the geometric strategy developed in \cite{HP24b}:
construct a weak Fano model of the pair $(\PP(V^\omega), \PP(T_M))$, i.e.,  a sequence of anti-flips
$$
\psi: (\PP(V^\omega), \PP(T_M)) \dashrightarrow (\PP(V^\omega)^+, \PP(T_M)^+) 
$$
to a mildly singular variety $\PP(V^\omega)^+$ such that $\PP(T_M)^+ \subset \PP(V^\omega)^+$ is a nef and big divisor. The geometry of $\psi$ then allows to decide
whether $Z_M$ is Stein or not \cite[Prop.1.5]{HP24b}. However the existence of a weak Fano model is not clear and requires
the finite generation of $R(M, T_M)$. Our first result shows that this approach can not be used in our case:

\begin{theorem} \label{thm:not-finite}
Let $E$ be an elliptic curve, and $L$ a line bundle of negative degree over $E$. Set $M:=\PP(\sO_E\oplus L)\to E$. 
Then the tangent bundle $T_M$ is big and the ring of global sections
$$
R(M, T_M) = \bigoplus_{k \in \N} H^0(M, S^k T_M)
$$
is not finitely generated.
\end{theorem}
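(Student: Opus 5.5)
The plan is to work on the threefold $X := \PP(T_M) \xrightarrow{p} M$ with tautological bundle $\xi := \sO_{\PP(T_M)}(1)$, since $R(M,T_M) = \bigoplus_k H^0(X, k\xi)$. Write $\pi\colon M\to E$ and $e := -\deg L > 0$. Let $C_-$ be the negative section, $C_-^2=-e$, and $C_+$ a disjoint positive section, $C_+^2 = e$, $C_+\equiv C_-+ef$. Since $T_E\simeq \sO_E$, the relative tangent sequence reads
$$0 \to T_{M/E}\simeq -K_M \to T_M \to \sO_M \to 0, \qquad -K_M \equiv 2C_-+ef = C_-+C_+ .$$
Consequently $S^kT_M$ has a filtration with graded pieces $-jK_M$, $0\le j\le k$.

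\emph{Bigness.} The divisor $-K_M = C_- + C_+$ is big, since $C_+$ is nef with $C_+^2=e>0$, so $h^0(M,-jK_M)\sim \tfrac{e}{2}j^2$. I will show that the sections of the graded pieces lift to a positive proportion. For this I use the obstruction groups $H^1(M,-iK_M)$, $i<j$: by Riemann--Roch, and because the fixed part is $jC_-$ with the mobile part $jC_+$ nef and big, these have dimension $O(j)$ rather than $O(j^2)$. Hence $h^0(S^kT_M) \ge \sum_{j\le k}\bigl(h^0(-jK_M) - O(k)\bigr) \sim c\,k^3$, so $\xi$ is big on the threefold $X$. Alternatively I can invoke Remark \ref{remark:extensionbig} if it already covers $T_M$.

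\emph{Non-finite generation.} I follow Zariski's classical mechanism. If $R(X,\xi)$ were finitely generated, then on a suitable birational model the positive part of $\xi$ in its (divisorial) Zariski decomposition would be semiample, and in particular its restriction to any subvariety would be semiample. So I will
\begin{enumerate}[(1)]
\item identify the divisorial base locus of $\xi$;
\item compute the positive part $P$;
\item find a subvariety on which $P$ restricts to a numerically trivial but non-torsion line bundle.
\end{enumerate}
The natural candidates live over $C_-$. The surface $D := p^{-1}(C_-) = \PP(T_M|_{C_-})$ is a ruled surface over the elliptic curve $C_-$, and $T_M|_{C_-}$ is an extension of $\sO$ by $N_{C_-/M}\simeq L$, of degree $-e$. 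First I determine whether this extension splits. If it does, $D$ contains the section $\gamma$ corresponding to the quotient $T_M|_{C_-}\to L$, on which $\xi\cdot\gamma=-e<0$, so $\gamma$ and $D$ enter the stable base locus. Second, I compute the asymptotic multiplicity of $D$ in $|k\xi|$ by using the filtration: a section of $S^kT_M$ with top component in $H^0(-jK_M)$ vanishes along $C_-$ to order at least $j$ in that component. This should give $P = \xi - aD$ (possibly after also subtracting a multiple of the section $\PP(\sO)\simeq M$ of $X$, on which $\xi$ is trivial). Finally I will restrict $P$ to a curve such as $\gamma$, or to the section $\PP(\sO)|_{C_-}$, to get a degree-zero line bundle on $C_-\simeq E$. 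This bundle should be built from $L$ and $N_{C_-/M}$, and should be, e.g., a non-trivial multiple of $L\otimes \sO_E(e\cdot o)$-type combinations that are non-torsion for general $L$. For the case of torsion $L$-data I would instead use a second curve (e.g. over $C_+$) or a Picard-number argument.

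The main obstacle is the last step. One must determine the Zariski decomposition of $\xi$ on the threefold $X$ precisely enough, including the asymptotic order along $D$ and whether a small modification is needed, to pin down the restriction of $P$ as an explicit degree-zero line bundle. One must then prove that it is non-torsion for every $L$ of negative degree, rather than just for very general $L$. If the divisorial decomposition turns out not to be enough, I would fall back on a direct algebraic argument: show that the subalgebra generated in degrees $\le k_0$ misses sections in $H^0(-jK_M)$ for infinitely many $j$. This should work because $H^0(M,-jK_M)\simeq \bigoplus_i H^0(E, L^{-i}\otimes(\text{degree-}e\ \text{twists}))$ with the twists varying non-periodically in $j$.
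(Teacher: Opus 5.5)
There are genuine gaps in both halves of your plan.

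\textbf{Bigness.} The estimate $h^1(M,-iK_M)=O(i)$ is false. Since $K_M^2=0$ and $\chi(\sO_M)=0$, Riemann--Roch gives $\chi(M,-iK_M)=0$. Together with $h^2(M,-iK_M)=0$ this yields $h^1(M,-iK_M)=h^0(M,-iK_M)=1+\tfrac{e}{2}i(i+1)$; for $i=1$ this is Lemma \ref{lemma-cohomologyKM}. It is precisely the fixed part $iC_-$ that produces this $H^1$; only $h^1(M,iC_+)=1$ is small. So the obstruction spaces for lifting sections of the graded pieces $-jK_M$ have total dimension of order $k^3$, and your count gives no lower bound. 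Falling back on Remark \ref{remark:extensionbig} does not help, since it concerns $V^\omega$ and itself rests on the missing ingredient. That ingredient is the elementary transform $\sF=\ker(T_M\to T_M|_{C_-}\to L_-)$. It is an extension of $\sO_M$ by $\sO_M(C_+)$, hence nef with $c_1^2(\sF)-c_2(\sF)=C_+^2>0$, hence big. The inclusion $\sF\subset T_M$ then makes $T_M$ big.

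\textbf{Non-finite generation.} Your steps (2) and (3) cannot work as planned. On $\PP(T_M)$ the divisorial Zariski decomposition of $\xi$ has zero negative part. Indeed, let $\nu\colon Y\to\PP(T_M)$ be the blow-up of $\gamma=C_0$ with exceptional divisor $F$. Then $\nu^*\xi=\tau^*\sO_{\PP(\sF)}(1)+F$ with $\tau^*\sO_{\PP(\sF)}(1)$ nef. Hence the asymptotic order of $\xi$ along $D$, or along $\PP(\sO_M)$, is $0$. So there is no $a>0$, and $P=\xi$ is not nef.

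The correct model is $Y$. There the negative part is $F$ with coefficient exactly $1$; the paper forces $\lambda\ge 1$ using $N^*_{C_0/\PP(T_M)}\simeq L^*\otimes\mathscr F_2$ and $\mathrm{Pseff}(\PP(\mathscr F_2))=\mathrm{Nef}(\PP(\mathscr F_2))$. This gives $R(M,T_M)\simeq R(\PP(\sF),\sO_{\PP(\sF)}(1))$.

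More seriously, the mechanism in your step (3) does not exist here. Every subvariety on which $\tau^*\sO_{\PP(\sF)}(1)$ is numerically trivial is either contracted by $\tau$ or maps into the section $\PP(\sO_M)\subset\PP(\sF)$ given by $\sF\to\sO_M$. On that section $\sO_{\PP(\sF)}(1)$ is trivial, so no non-torsion degree-zero restriction ever appears. Your own candidates give $L$, of degree $-e$, on $\gamma$, and $\sO_{C_-}$ on $\PP(\sO)|_{C_-}$.

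The actual obstruction is that $\sF|_{C_-}$ is the non-split Serre extension $\mathscr F_2$; this is Proposition \ref{prop:F_CisSerre}, proved via $h^0(\sF|_{C_-})=1$. The tautological bundle of $\mathscr F_2$ is nef with one-dimensional $H^0$ in every degree, hence not semiample. This holds for every $L$ of negative degree, independently of any torsion property of $L$. Zariski's theorem (a nef and big line bundle with finitely generated section ring is semiample) then concludes.

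Your fallback fails as well. The subring $\bigoplus_j H^0(M,-jK_M)\simeq\bigoplus_j H^0(M,jC_+)$ is finitely generated because $C_+$ is semiample. So for $k_0$ large it lies in the subalgebra generated in degrees $\le k_0$, and it cannot be the source of non-finite generation.
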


In summary we obtain an abundance of global sections of $\sO_{\PP(V^\omega)}(k)$ defining rational functions on $Z_M$,
but it is not clear how to control their properties in an abstract setting and use them to investigate the function theory of $Z_M$.
The key idea of this paper is to overcome this difficulty  by constructing directly a holomorphic function 
$$
\holom{\mu}{Z_M}{\C}
$$ 
without using
the embedding $Z_M \hookrightarrow \PP(V)$. 
For this purpose we show in Proposition \ref{prop:key} that if $L$ has degree $-1$,  the universal cover
$\tilde Z_M \rightarrow Z_M$ is isomorphic to a product $\C^2 \times Q$ where $Q$ is an affine quadric.
We also describe the action of the fundamental group via the factor of automorphy corresponding to the line bundle $L^*$. 
This preparation allows to define the function $\mu$ and find a special fibre that is not a Stein manifold
 (cf. Theorem \ref{thm:degree-one}). The case where the degree of $L$ is arbitrary then follows easily by using a suitable covering.

{\bf Acknowledgements.} This project started during a visit of the first-named author to Université Côte d’Azur. He would like to thank the Laboratoire J.\ A.\ Dieudonné, and the university for their hospitality and for providing a pleasant and stimulating working environment.
\medskip

A.H.\ was supported by the ANR-DFG project \enquote{Positivity on K-trivial varieties}, ANR-23-CE40-0026 and DFG Project-ID 530132094, the ANR-Project ``Groups in algebraic geometry'' ANR-24-CE40-3526, and partially supported by the France 2030 investment plan managed by the ANR, as part of the Initiative of Excellence of Universit\'e C\^ote d'Azur, reference ANR-15-IDEX-01. F.\ A.\ D.\ was supported by the by the European Research Council (ERC) under the European Union's Horizon 2020 research and innovation program (ERC-2020-SyG-854361-HyperK), and by the Deutsche Forschungsgemeinschaft (DFG, German Research Foundation) – Project ID 286237555 (TRR 195).

\section{Notation and basic facts}

We work over the complex numbers, for general definitions we refer to \cite{Har77, Kau83}. 
Complex spaces and varieties will always be supposed to be irreducible and reduced. 
We follow \cite{Laz04b} for notions of positivity of vector bundles. We will use interchangeably the notions of line bundles, divisors and invertible sheaves on a complex manifold.

\subsection{Reminder on vector bundles and their extensions} \label{subsection:reminder-cech}

Given a vector bundle $\sE$ on a complex manifold $X$, we denote its total space by $\mathrm{Tot}(\sE)$. We denote by 
$$
\mathbf P(\sO_X \oplus \sE) \rightarrow X\; (\text{resp.\ } \PP(\sO_X\oplus \sE)\to X)
$$
the {\em non-Grothendieck} (resp.\ {\em Grothendieck}) projectivisation of the vector bundle $\sO_X \oplus \sE$. We can define an embedding
$$
 \mathrm{Tot}(\sE) \hookrightarrow  \mathbf P(\sO_X \oplus \sE), \qquad (x,s) \ \mapsto (x, [1:s]);
$$
one checks that this local formula is well-defined and that the image is
$$
\mathbf P(\sO_X \oplus \sE) \setminus  \mathbf P(0 \oplus \sE).
$$
Since $\mathbf P(\sO_X \oplus \sE) \simeq \PP(\sO_X \oplus \sE^*)$, we have an embedding
$$
\mathrm{Tot}(\sE) \hookrightarrow \PP(\sO_X \oplus \sE^*)
$$
whose image is $\PP(\sO_X \oplus \sE^*) \setminus \PP(\sE^*)$.

\begin{example} \label{example:complement}
Let $X$ be a smooth projective curve, and let $L^*$ be a line bundle of degree $1$ on $X$.
Then we have an inclusion
$$
\mathrm{Tot}(L^*) \hookrightarrow \PP(\sO_X \oplus L) =: M
$$
whose complement is the curve $C_-:=\PP(L) \subset \PP(\sO_X \oplus L)$. 
Since $C_-^2=-1$ we can use Mumford's criterion to obtain a birational morphism
$$
\PP(\sO_X \oplus L) \rightarrow M'
$$
contracting $C_-$ onto a point $p$. The embedding 
$$
\mathrm{Tot}(L^*) \hookrightarrow M'
$$
is thus a one-point compactification of the total space. 
Let $C_+ \subset \mathrm{Tot}(L^*)$ be the zero section, then 
by the above we have an embedding
$$
\mathrm{Tot}(L^*) \setminus C_+ \hookrightarrow M'  \setminus C_+. 
$$ 
Since the complement is just the point $p$, we know by \cite[V, Theorem 4]{GR79} that $\mathrm{Tot}(L^*) \setminus C_+$ is not Stein.
\end{example}

\begin{definition}
Let $\sE$ be a vector bundle on a projective manifold, and denote by $\PP(\sE)$ the Grothendieck projectivisation.
We see that the vector bundle $\sE$ is ample (resp. nef, big, pseudoeffective, globally generated, semiample) if the tautological
line bundle $\sO_{\PP(\sE)}(1)$ is ample (resp. nef, big, pseudoeffective, globally generated, semiample).
\end{definition}

\begin{remark*}
While the definition of ample and nef vector bundles is uniform in the literature, this is not the case for big, pseudoeffective or semiample (cf. \cite{Bplus15, MU19} for an overview). 
\end{remark*}

\begin{remark} \label{remark:nef-big}
It can be difficult to verify that a vector bundle is big, this becomes much easier if the vector bundle is nef:
let $\sF \rightarrow M$ be a nef vector bundle of rank $r$ on a projective surface, and denote by $\sO_{\PP(\sF)}(1)$ the tautological bundle. Then $\sO_{\PP(\sF)}(1)$ is big if and only if
$$
(c_1(\sO_{\PP(\sF)}(1)))^{r+1}>0.
$$
By the definition of Segre classes this is equivalent to showing that
$$
c_1(\sF)^2-c_2(\sF)>0.
$$
\end{remark}

\begin{fact} \label{fact-cech}
Consider an extension of locally free sheaves
\begin{equation}\label{eqn:extension}
0 \to \sF \to \sE \to \sO_X \to 0
\end{equation}
on a complex space $X$, and recall \cite[III, Ex.6.1]{Har77} that the corresponding extension
in $\mbox{Ext}^1(\sO_X, \sF) \simeq H^1(X, \sF)$ is the image of $1 \in H^0(X, \sO_X)$
under the connecting homomorphism
$$
\delta\colon H^0(X, \sO_X) \rightarrow H^1(X, \sF).
$$
Moreover, fix an open cover $\mathcal U=\{U_i\}$ on which $1\in\sO(U_i)$ admits a lift $\sigma_i\in\sE(U_i)$. On $U_i\cap U_j$ we set
$$
c_{ij}:=\sigma_j-\sigma_i\in\sF(U_i\cap U_j).
$$ 
Then $\delta(1) = \{c_{ij}\}$, so it is a \v{C}ech $1$-cocycle representing the extension class of \ref{eqn:extension}.
If $q_i\in\sF(U_i)$ is a local section of the kernel, then $\sigma_i-q_i$ is again a lift of $1$, and the cocycle defined by these new lifts is $c_{ij}-(q_j-q_i)$. 
\end{fact}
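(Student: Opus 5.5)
The plan is to compute the connecting homomorphism $\delta$ directly in \v{C}ech cohomology, using the snake-lemma construction on \v{C}ech complexes. The identification of the extension class with $\delta(1)$ is the cited exercise \cite[III, Ex.6.1]{Har77}, so it can be taken as given. That leaves three things to check: that the cocycle $\{c_{ij}\}$ makes sense, that it represents $\delta(1)$, and the effect of changing the lifts.

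First, a cover $\mathcal U$ as in the statement exists. The surjection $\sE \to \sO_X$ of locally free sheaves splits locally, so on a sufficiently fine open cover $1$ lifts to sections $\sigma_i \in \sE(U_i)$. On $U_i \cap U_j$ the difference $\sigma_j - \sigma_i$ maps to $1-1=0$ in $\sO_X$. By exactness of (\ref{eqn:extension}) it therefore lies in $\sF(U_i\cap U_j)$, so $c_{ij}$ is well defined.

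The cocycle condition is immediate from the telescoping identity
$$
c_{jk}-c_{ik}+c_{ij}=(\sigma_k-\sigma_j)-(\sigma_k-\sigma_i)+(\sigma_j-\sigma_i)=0 .
$$
To see that this cocycle computes $\delta(1)$, consider the short exact sequence of \v{C}ech complexes
$$
0\to \check C^\bullet(\mathcal U,\sF)\to \check C^\bullet(\mathcal U,\sE)\to \check C^\bullet(\mathcal U,\sO_X)\to 0 .
$$
This sequence is exact on the given cover because every $U_i$ and every intersection admits lifts, obtained by restricting the $\sigma_i$ and multiplying by functions. The snake lemma then gives the following recipe for $\delta(1)$:
\begin{itemize}
\item lift the $0$-cochain $(1)_i$ to the cochain $(\sigma_i)_i$;
\item apply the \v{C}ech differential to obtain $(\sigma_j-\sigma_i)_{ij}$;
\item observe that this cochain lies in $\check C^1(\mathcal U,\sF)$.
\end{itemize}
This is exactly $\{c_{ij}\}$. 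The differential convention $(d s)_{ij}=s_j-s_i$ is the one fixing the sign, so no sign discrepancy appears.

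Two compatibility points remain, and I expect them to be the main technical obstacle, though a standard one.
\begin{itemize}
\item \emph{Passing to sheaf cohomology.} The natural map $\check H^1(\mathcal U,\sF)\to H^1(X,\sF)$ is compatible with connecting homomorphisms. This holds because it arises from the natural map of $\delta$-functors, or equivalently by passing to the colimit over refinements, where $\check H^1$ agrees with $H^1$ for any sheaf.
\item \emph{Changing the lifts.} Replacing $\sigma_i$ by $\sigma_i-q_i$ with $q_i\in\sF(U_i)$ changes the differences to
$$
(\sigma_j-q_j)-(\sigma_i-q_i)=c_{ij}-(q_j-q_i).
$$
So the cocycle changes by the \v{C}ech coboundary of $(q_i)$, and its cohomology class is independent of the choice of lifts.
\end{itemize}
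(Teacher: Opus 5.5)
Your argument is correct and is the standard justification behind this Fact. The paper gives no proof of its own: it cites \cite[III, Ex.6.1]{Har77} for $\delta(1)$ being the extension class and treats the \v{C}ech description as the usual snake-lemma computation, which is exactly what you carry out. This includes your correct observation that the \v{C}ech complexes form a short exact sequence on $\mathcal U$, because $g\,\sigma_{i_0}$ lifts any $g\in\sO_X(U_{i_0\cdots i_p})$.

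The one step stated loosely is the compatibility of $\check H^1(\mathcal U,-)\to H^1(X,-)$ with connecting maps, since \v{C}ech cohomology on a fixed cover is not a $\delta$-functor in general. It does hold for short exact sequences whose sections are exact on all finite intersections of $\mathcal U$, and that is precisely the condition you verified.
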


\begin{example} \label{example:Serre}
Let $E$ be an elliptic curve. Then $\mbox{Ext}^1(\sO_E, \sO_E) = H^1(E, \sO_E) = \C$, so up to isomorphism there exists a unique non-trivial extension
$$
0 \rightarrow \sO_E \rightarrow \mathscr F_2 \rightarrow  \sO_E \rightarrow 0.
$$
We call $\mathscr F_2$ the Serre bundle, it is nef but not semiample \cite[Example 1.7]{DPS94}: we have
$$
H^0(\PP(\mathscr F_2), \sO_{\PP(\mathscr F_2)}(k)) = \C \qquad \forall \ k \in \N.
$$
The tautological bundle $\sO_{\PP(\mathscr F_2)}(1)$ is nef and not big, so we have
$$
\mbox{Nef}(\PP(\mathscr F_2)) = \R_{\geq 0} \sO_{\PP(\mathscr F_2)}(1) + \R_{\geq 0} \sO_{\PP(\mathscr F_2)}(l)
$$
where $l$ is a fibre of the ruling. Since $\mathscr F_2$ is semistable we know by \cite[Section.1.5.A]{Laz04a} that
$$
\mbox{Nef}(\PP(\mathscr F_2)) = \mbox{Pseff}(\PP(\mathscr F_2)). 
$$
\end{example}


\subsection{Factors of automorphy}

Let us give a summary of a special case of factors of automorphy as presented in \cite[Appendix B]{BL04}, cf. \cite{Ien11}
for a detailed exposition.

\begin{definition}
Let $E = \C/\Lambda$ be an elliptic curve, and identify the lattice $\Lambda$ with $\pi_1(E)$. To avoid confusion, we will denote the universal cover $\C$ by $\tilde E$.
A factor of automorphy is a holomorphic function
$$
f \colon \Lambda \times \tilde E \rightarrow \C^*
$$
such that, for every $\alpha, \beta \in \Lambda$, and every point $z \in \tilde E$ we have
$$
f(\alpha+\beta, z) = f(\alpha, z+\beta) \cdot f(\beta, z).
$$
\end{definition}

\begin{definition} \label{defn:automorphy-line-bundle}
Let $E = \C/\Lambda$ be an elliptic curve, and 
$$
f \colon \Lambda \times \tilde E \rightarrow \C^*
$$
a factor of automorphy. Then we have a free and properly discontinuous group action of $\Lambda$ on $\tilde E \times \C$
given by
$$
\lambda\cdot(z,t) := (z+\lambda, f(\lambda, z) \cdot t),
$$
and the quotient $(\tilde E \times \C)/\Lambda$ is the total space of a line bundle over $E$.
\end{definition}

\begin{example} \label{example:automorphy} \cite[Ex.4.2]{Ien11}
Let $\Lambda = \Z \oplus \Z \tau$ be a lattice and set $E:=\C/\Lambda$.
Fix a point  $p \in E$ and write $p=\overline{a \tau+ b}$ with $a, b \in \R$  
The function
$$
b \colon \Lambda \times \tilde E \rightarrow \C^*, \qquad
(\lambda = m \tau+n, z) \ \mapsto \ e^{i \pi \left(2 a \lambda - m^2 \tau- 2m (z+a \tau+ b)\right)}
$$
is a factor of automorphy defining the line bundle $\sO_E(p)$ on the elliptic curve $E$.  
For fixed $\lambda \in \Lambda$ we will denote
$$
b_\lambda \colon \tilde E \rightarrow \C^*, \qquad z \ \mapsto \ b(\lambda, z).
$$
A straightforward computation shows that for $\lambda=m \tau+n$ one has
$$
\frac{b_\lambda'(z)}{b_\lambda(z)}=-2\pi i m.
$$
\end{example}

\subsection{Coordinates on an affine quadric}

\begin{notation} \label{notationQ}
Let $[x_0:x_1]$ and $[w_0:w_1]$ be homogeneous coordinates on the two factors of $\PP^1\times\PP^1$, and set for $p \in \PP^1 \times \PP^1$
$$
d_{Q}(p):=x_0 w_1- x_1 w_0.
$$ 
We set 
$$
Q:= \{ p \in \PP^1 \times \PP^1 \ | \  d_{Q}(p) \neq 0\}.
$$ 
\end{notation}

We note that $Q$ is an affine quadric; it is the complement of a hyperplane section for the Segre embedding
$\PP^1 \times \PP^1 \hookrightarrow \PP^3$.

\begin{lemma} \label{lemma:coordinates-quadric}
With the notation above, set 
$$
Q_0:=Q\cap\{x_0\neq0\}, \qquad Q_\infty:=Q\cap\{x_1\neq0\},
$$
and fix $0 \neq \rho \in \C$.
Then 
\[
Q_0 \to\C_t\times\C_y,\; ([x_0:x_1],[w_0:w_1]) \mapsto\left(\frac{x_1}{x_0},\frac{\rho x_0w_0}{x_0 w_1- x_1 w_0}\right)
\]
\[
Q_\infty\to\C_v\times\C_{y_\infty},\; ([x_0:x_1],[w_0:w_1])\mapsto\left(\frac{x_0}{x_1},\frac{- \rho x_1w_1}{x_0 w_1- x_1 w_0}\right)
\]
are affine coordinate charts with inverse
\[
\psi_0\colon \C_t\times\C_y\to Q_0,\; (t,y)\mapsto([1:t],[y:ty+\rho]).
\]
\[
\psi_\infty\colon \C_v\times\C_{y_\infty}\to Q_\infty,\; (v,y_\infty)\mapsto([v:1],[vy_\infty+\rho:y_\infty]).
\]
On the overlap $Q_0\cap Q_\infty$ we have the relations
\[
v=\frac{1}{t},\; y_\infty=-t^2y-\rho t.
\]
\end{lemma}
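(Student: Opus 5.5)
The statement is elementary, so I will verify it by direct computation in each chart rather than invoke any earlier result. Start with $Q_0$. On $Q_0$ we may normalise $x_0=1$, so a point is $([1:t],[w_0:w_1])$ with $t=x_1/x_0$. The condition $d_Q\neq 0$ reads $w_1-tw_0\neq 0$. The second coordinate is then
$$y=\frac{\rho w_0}{w_1-tw_0},$$
which is well defined: numerator and denominator are both homogeneous of degree one in $(w_0,w_1)$, and the denominator does not vanish. So the map $Q_0\to\C^2$ is holomorphic.

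Next I check that $\psi_0$ lands in $Q_0$ and inverts this map. We have
$$d_Q(\psi_0(t,y))=1\cdot(ty+\rho)-t\cdot y=\rho\neq 0,$$
so $\psi_0(t,y)\in Q_0$, and in particular $[y:ty+\rho]$ is a genuine point of $\PP^1$. Plugging $\psi_0(t,y)$ into the forward map gives back $t$, and $\rho y/\rho=y$. For the other composition, take a point of $Q_0$ and rescale $(w_0,w_1)$ so that $w_1-tw_0=\rho$. This is possible precisely because $d_Q\neq0$. After the rescaling $y=w_0$ and $w_1=ty+\rho$, so $\psi_0$ returns the original point. Hence the first map is a biholomorphism $Q_0\simeq\C^2$.

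The chart $Q_\infty$ is handled the same way. Normalise $x_1=1$, so a point is $([v:1],[w_0:w_1])$, and then
$$y_\infty=\frac{-\rho w_1}{vw_1-w_0}.$$
For $\psi_\infty(v,y_\infty)$ one computes $d_Q=v y_\infty-(vy_\infty+\rho)=-\rho\neq0$. The forward map then returns $-\rho y_\infty/(-\rho)=y_\infty$. In the reverse direction, rescale $(w_0,w_1)$ so that $vw_1-w_0=-\rho$; this gives $w_1=y_\infty$ and $w_0=vy_\infty+\rho$, as required.

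Finally, the transition on $Q_0\cap Q_\infty$, where $t\neq0$. Clearly $v=x_0/x_1=1/t$. To get $y_\infty$, evaluate its defining formula on $\psi_0(t,y)$, using $x_0=1$, $x_1=t$, $w_0=y$, $w_1=ty+\rho$ and $d_Q=\rho$:
$$y_\infty=\frac{-\rho\, t\,(ty+\rho)}{\rho}=-t^2y-\rho t.$$
There is no genuine obstacle here. The only points requiring care are the homogeneity and scaling checks, so that each formula is independent of the choice of representatives, and keeping track of the sign of $d_Q$ in the $Q_\infty$ chart.
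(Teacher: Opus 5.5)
Your proposal is correct and follows the same route as the paper, whose proof is the one-line observation that $(d_Q\circ\psi_0)(t,y)=\rho$ and $(d_Q\circ\psi_\infty)(v,y_\infty)=-\rho$ make the inverse checks immediate. You simply carry out those checks explicitly (rescaling so that $d_Q=\pm\rho$) and add the direct computation of the transition $y_\infty=-t^2y-\rho t$, which the paper leaves to the reader.
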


\begin{proof}
This is elementary: note that the equalities
$$
(d_Q \circ \psi_0)(t,y)=\rho, \qquad (d_Q \circ \psi_\infty)(v,y_\infty)=-\rho
$$
make it easy to check that the maps are indeed inverses of each other.
\end{proof}

\section{Geometry of the ruled surface} \label{section:geometry}

\begin{setup} \label{setup:ruled}
Let $E$ be an elliptic curve, and let $L$ be a line bundle of degree $-d<0$ on $E$. We set
$$
f \colon M:= \PP(\sO_E \oplus L) \rightarrow E.
$$
We denote by $\PP(L) =: C_- \subset M$ the negative section and note that $C_- \simeq \sO_{\PP(\sO_E \oplus L)}(1)$.
The quotient $\sO_E \oplus L \rightarrow \sO_E$ defines a curve $C_+ \subset M$, it is elementary to see
that $C_+$ is an element of the linear system $|C_- + f^* L^*|$. 
By construction the curves $C_-$ and $C_+$ are disjoint, so we have  $C_+ \cdot C_-=0$
\end{setup}

From the formula for the canonical bundle of a projectivised bundle we obtain 
\begin{equation}
\label{anticanonicalM}
- K_M \simeq 2 C_- + f^* L^* \simeq C_- + C_+.
\end{equation}

\begin{lemma} \label{lemma-cohomologyKM}
In the Setup \ref{setup:ruled} one has
\begin{equation}
h^0(M, -K_M) = d+1 = h^1(M, -K_M)
\end{equation}
and $h^2(M, -K_M)=0$.
\end{lemma}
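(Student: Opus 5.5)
The plan is to push everything down to $E$ via $f$ and compute cohomology on the elliptic curve. By \eqref{anticanonicalM} we have $-K_M \simeq 2C_- + f^*L^*$, and $C_- \simeq \sO_{\PP(\sO_E\oplus L)}(1)$. Hence
$$
f_*\sO_M(-K_M) \simeq S^2(\sO_E\oplus L)\otimes L^* \simeq L^* \oplus \sO_E \oplus L,
$$
and $R^1 f_* \sO_M(2) = 0$. The second assertion holds because the relative degree is $2 \geq -1$ on the $\PP^1$-fibres. The Leray spectral sequence then degenerates, giving $H^i(M,-K_M) \simeq H^i(E, L^*\oplus \sO_E\oplus L)$ for all $i$. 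In particular $h^2(M,-K_M)=0$, since $E$ is a curve.

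Next we compute on $E$. Here $\deg L^* = d>0$ and $\deg L=-d<0$. By Riemann--Roch on an elliptic curve, $h^0(L^*)=d$ and $h^1(L^*)=0$. We also have $h^0(\sO_E)=h^1(\sO_E)=1$. Finally $h^0(L)=0$, and by Serre duality $h^1(L)=h^0(L^*)=d$.

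Summing the three summands gives $h^0(M,-K_M)=d+1$ and $h^1(M,-K_M)=1+d$, as claimed.

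The only point that needs care is the convention for the Grothendieck projectivisation. We need $f_*\sO_{\PP(\sE)}(k)=S^k\sE$ with $\sE=\sO_E\oplus L$, so that $C_-=\PP(L)$ corresponds to $\sO(1)$. One must then check that twisting by $f^*L^*$ yields $L^*\oplus\sO_E\oplus L$ and not its dual. As a sanity check, $\chi(-K_M)=K_M^2/1\cdot\tfrac12\cdot 2+\chi(\sO_M)$; Riemann--Roch gives $\chi(-K_M)=K_M^2+\chi(\sO_M)=0+0=0$. This matches $(d+1)-(d+1)+0$, which confirms the sign conventions.
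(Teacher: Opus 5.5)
Your proof is correct and follows the paper's own argument: write $-K_M \simeq 2C_- + f^*L^*$, push forward to obtain $L^*\otimes S^2(\sO_E\oplus L)\simeq L^*\oplus\sO_E\oplus L$ with $R^1f_*=0$, and read off the cohomology on $E$ via the Leray spectral sequence and Riemann--Roch, with you supplying the summand-by-summand count that the paper leaves implicit. Your final sanity-check paragraph is unnecessary. Its worry about obtaining the dual is also vacuous, since $L^*\oplus\sO_E\oplus L$ is self-dual.
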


\begin{proof}
We have $-K_M \simeq f^* L^* + 2C_-$ and therefore 
$$
f_* \sO_M(-K_M) \simeq L^* \otimes S^2 (\sO_E \oplus L).
$$ 
The statement now follows from the Leray spectral sequence and the Riemann-Roch formula on $E$.
\end{proof}

The curve  $C_- \subset M$ being a section of the fibration $f$, its normal sequence is split, i.e., we have
$$
T_{M}|_{C_-} \simeq \sO_{C_-} \oplus L_-
$$
where $L_- := N_{C_-/M} \simeq L$. Since $L_-$ has negative degree, we have a unique surjective morphism
\begin{equation}
\label{negative-quotient}
\kappa \colon T_M|_{C_-} \twoheadrightarrow L_-
\end{equation}
and its kernel is the subbundle $T_{C_-} \subset T_M|_{C_-}$.
Consider the exact sequence
$$
0 \rightarrow -K_M \rightarrow T_M \rightarrow f^* T_E \simeq \sO_M \rightarrow 0.
$$
This sequence is not split: if $T_M \simeq -K_M \oplus \sO_M$, it is generated by global sections on the complement of $C_-$.
Thus $M$ is almost homogeneous, but by \cite[Theorem 3]{Potters69} this implies $\deg L=0$, a contradiction. Thus we have 
\begin{equation}
\label{sectionsTM}
 H^0(M, T_M)  \cong H^0(M, -K_M) \cong \C^{d+1}.
\end{equation}

Let $\sF \rightarrow M$ be the kernel of the morphism
$$
T_M \twoheadrightarrow T_{M}|_{C_-} \stackrel{\kappa}{\twoheadrightarrow} L_-,  
$$
so we have an exact sequence
\begin{equation}
\label{eqn-defineF}
0 \rightarrow \sF \rightarrow T_M \rightarrow L_- \rightarrow 0
\end{equation}
Then $\sF$ is locally free of rank two  \cite[Theorem 1.4]{Mar82} and the following holds:

\begin{lemma}\label{lemma:defineF}
In the Setup \ref{setup:ruled}  the vector bundle $\sF$ is given by an extension
\begin{equation}\label{eqn:extensionF}
0 \rightarrow \sO_M(C_+) \rightarrow \sF \rightarrow \sO_M \rightarrow 0
\end{equation}
In particular, $\sF$ is big and nef.
\end{lemma}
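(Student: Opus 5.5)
The strategy is to compare the elementary modification $\sF$ with the relative tangent sequence
$$
0 \rightarrow -K_M \rightarrow T_M \rightarrow \sO_M \rightarrow 0 .
$$
We first show that the composition $-K_M \hookrightarrow T_M \twoheadrightarrow L_-$ vanishes to order exactly one along $C_-$. Restricted to $C_-$, the relative tangent bundle $T_{M/E}|_{C_-}$ is the normal bundle of the section $C_-$, so $-K_M|_{C_-} = T_{M/E}|_{C_-} \simeq L_-$. Moreover the map $T_{M/E}|_{C_-}\to T_M|_{C_-}\to L_-$ is the isomorphism onto $N_{C_-/M}$. Hence the composition $-K_M \to L_-$ is the surjective restriction map. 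Its kernel is $-K_M(-C_-) \simeq \sO_M(C_+)$, using $-K_M\simeq C_-+C_+$ from \eqref{anticanonicalM}. So $\sO_M(C_+) = -K_M\cap \sF \subset \sF$.

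Next we determine the quotient $\sF/\sO_M(C_+)$. Apply the snake lemma to the diagram whose rows are
$$
0\to \sO_M(C_+)\to\sF\to \sF/\sO_M(C_+)\to 0 \quad\text{and}\quad 0\to -K_M\to T_M\to\sO_M\to 0,
$$
with the inclusions as vertical maps. The cokernels of the first two vertical maps are both $L_-$, and the induced map between them is the identity, because both are given by $\kappa$ composed with restriction. Consequently $\sF/\sO_M(C_+)\to\sO_M$ is injective with zero cokernel, hence an isomorphism. This yields the extension \eqref{eqn:extensionF}. To make the argument clean one can say it directly: $T_M\to L_-$ restricted to $-K_M$ is already surjective, so $\sF\to T_M\to \sO_M$ is surjective with kernel $-K_M\cap\sF=\sO_M(C_+)$.

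For positivity, note that $\sO_M(C_+)$ is nef: $C_+^2 = C_+\cdot(C_-+f^*L^*) = 0 + d > 0$, and $C_+$ is an irreducible curve. Also $\sO_M$ is nef, and an extension of nef bundles is nef, so $\sF$ is nef. For bigness we use Remark \ref{remark:nef-big}. From the extension, $c_1(\sF)=C_+$ and $c_2(\sF)=C_+\cdot 0 = 0$, so $c_1^2-c_2 = C_+^2 = d>0$. Therefore $\sF$ is big.

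The only delicate step is the identification of $T_{M/E}|_{C_-}\to L_-$ as an isomorphism. This amounts to observing that the relative tangent direction is transverse to $C_-$ along the section, since $C_-$ maps isomorphically to $E$. Once that is settled, the rest is formal.
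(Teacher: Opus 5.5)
Your proof is correct. It follows the same overall plan as the paper, but you identify the kernel by a different mechanism.

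The paper works in two steps:
\begin{itemize}
\item It first proves that $\sF \to T_M \to f^*T_E \simeq \sO_M$ is surjective. Along $C_-$, the image of $\sF|_{C_-}$ in $T_M|_{C_-}$ is $\ker\kappa = T_{C_-}$, and this maps onto $f^*T_E|_{C_-}$ because $C_-$ is a section.
\item It then computes the kernel $\sK \simeq \det\sF$ with determinants. The map $\det\sF \to \det T_M \simeq -K_M$ vanishes to order one along $C_-$, so $\det\sF \simeq -K_M - C_- \simeq C_+$.
\end{itemize}

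You reverse the order. You identify the kernel directly as $-K_M \cap \sF = -K_M(-C_-) \simeq \sO_M(C_+)$, using that the vertical summand $T_{M/E}|_{C_-}$ maps isomorphically onto $N_{C_-/M} = L_-$. Surjectivity of $\sF \to \sO_M$ then follows formally from the snake lemma.

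Both arguments rest on the same transversality of $C_-$ to the fibres, viewed from the two complementary summands of $T_M|_{C_-} = T_{C_-} \oplus T_{M/E}|_{C_-}$. Your version avoids the vanishing-order bookkeeping for $\det\sF \to \det T_M$. It also exhibits $\sO_M(C_+)$ concretely as the vertical vector fields vanishing along $C_-$. The paper's version is slightly shorter if one takes for granted how determinants behave under elementary transformations. Your positivity argument (extension of nef line bundles, $c_1^2 - c_2 = C_+^2 = d > 0$) is the same as the paper's.
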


\begin{proof}
Note first that the composition
$$
\sF \rightarrow T_M \rightarrow f^* T_E \simeq \sO_M
$$
is surjective: this is clear on the complement of $C_-$, and on the curve $C_-$ the image of
$$
\sF|_{C_-} \rightarrow T_M|_{C_-} 
$$
is by definition of $\kappa$ equal to the subbundle $T_{C_-} \subset T_M \otimes \sO_{C_-}$.
Yet the tangent map maps $T_{C_-}$ surjectively onto $f^* T_E \otimes \sO_{C_-}$,
so $\sF \rightarrow \sO_M$ is surjective.

Thus we have an exact sequence
$$
0 \rightarrow \sK \rightarrow \sF \rightarrow \sO_M \rightarrow 0
$$
and we are left to determine the line bundle $\sK \simeq \det \sF$.
The morphism $\sF \rightarrow T_M$ induces an injective morphism
$
\det \sF \rightarrow \det T_M \simeq -K_M
$
that vanishes with order one on $C_-$. Thus $\det \sF + C_- \simeq -K_M$, combined with  \eqref{anticanonicalM}
this yields $\det \sF \simeq C_+$.

For the second statement, note first
that $\sF$ is an extension of nef line bundles, so it is nef. Moreover $c_1^2(\sF)=C_+^2>0$ and $c_2(\sF)=0$, so $c_1^2(\sF)-c_2(\sF)>0$. Thus $\sF$ is nef and big by Remark \ref{remark:nef-big}.
\end{proof}

\begin{lemma}\label{lem:cohomologyF}
In the Setup \ref{setup:ruled}  we have
$$
h^0(M,\sF)=1+d, \qquad h^1(M,\sF)=1,
$$
and $h^i(M,\sF)=0$ for $i\geq 2$.
\end{lemma}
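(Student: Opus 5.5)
We compute the cohomology of $\sF$ from the extension \eqref{eqn:extensionF} of Lemma \ref{lemma:defineF},
$$
0 \rightarrow \sO_M(C_+) \rightarrow \sF \rightarrow \sO_M \rightarrow 0 .
$$
First we compute the cohomology of $\sO_M(C_+)$. Since $C_+ \in |C_- + f^*L^*|$ and $C_- \simeq \sO_{\PP(\sO_E\oplus L)}(1)$, the projection formula gives
$$
f_*\sO_M(C_+) \simeq L^* \otimes (\sO_E \oplus L) \simeq L^* \oplus \sO_E, \qquad R^1 f_* \sO_M(C_+) = 0 ,
$$
because $\sO(1)$ has vanishing higher direct images on a $\PP^1$-bundle. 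By Leray, $H^i(M,\sO_M(C_+)) \simeq H^i(E, L^*\oplus \sO_E)$. Since $\deg L^* = d>0$ on the elliptic curve $E$, Riemann--Roch gives
$$
h^0(M,\sO_M(C_+)) = d+1, \qquad h^1(M,\sO_M(C_+)) = 1, \qquad h^2(M,\sO_M(C_+)) = 0 .
$$
Also $h^0(\sO_M)=h^1(\sO_M)=1$ and $h^2(\sO_M)=0$, as $M$ is a ruled surface over an elliptic curve.

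Next we run the long exact sequence:
$$
0 \to H^0(\sO_M(C_+)) \to H^0(\sF) \to H^0(\sO_M) \stackrel{\delta}{\to} H^1(\sO_M(C_+)) \to H^1(\sF) \to H^1(\sO_M) \to H^2(\sO_M(C_+)) = 0 .
$$
All $H^2$ terms vanish, so $h^2(\sF)=0$, and $h^i(\sF)=0$ for $i\ge 3$ for dimension reasons. The remaining claims $h^0(\sF)=1+d$ and $h^1(\sF)=1$ are equivalent to the single statement that $\delta$ is an isomorphism $\C\to\C$, i.e.\ that the extension class of \eqref{eqn:extensionF} in $H^1(M,\sO_M(C_+))\simeq\C$ is nonzero. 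Indeed, $\delta$ injective gives $h^0(\sF)=d+1$, and then $H^1(\sF)\simeq H^1(\sO_M)\simeq\C$.

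The main step is to show that \eqref{eqn:extensionF} does not split. We argue by contradiction, in the same way as for $T_M$ before \eqref{sectionsTM}. Suppose $\sF \simeq \sO_M(C_+)\oplus \sO_M$. Then $\sF$ has a nowhere-vanishing section $s$ mapping to $1 \in H^0(\sO_M)$. Composing with $\sF \hookrightarrow T_M$ gives a vector field $v$ on $M$ whose image in $H^0(f^*T_E) \simeq \C$ is nonzero. It remains to see that no such vector field exists. The tangent sequence $0\to -K_M \to T_M \to \sO_M\to0$ is non-split, and $h^0(T_M)=h^0(-K_M)$ by \eqref{sectionsTM}. Hence $H^0(T_M)\to H^0(\sO_M)$ is zero, so every vector field is vertical. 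This contradicts the existence of $v$, so \eqref{eqn:extensionF} is non-split and $\delta\neq 0$. Although this is the crux, it reduces directly to facts already established in the paper.
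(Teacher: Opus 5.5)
Your proof is correct, but it is organised differently from the paper's. The paper never looks at the connecting map of \eqref{eqn:extensionF}. It gets $h^0(M,\sF)=h^0(M,T_M)=d+1$ from the elementary-transformation sequence \eqref{eqn-defineF}, using only that $H^0(C_-,L_-)=0$. It then gets $h^1(M,\sF)=1$ and the vanishing of $h^2$ by pushing \eqref{eqn:extensionF} down to $E$, noting $R^jf_*\sF=0$ for $j\ge 1$, and computing $\chi(E,f_*\sF)=d$ by Riemann--Roch.

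You instead run the long exact sequence of \eqref{eqn:extensionF} on $M$ and reduce both equalities to $\delta(1)\neq 0$. You prove this from two facts. First, by its construction in Lemma \ref{lemma:defineF}, the map $\sF\to\sO_M$ is the composite $\sF\to T_M\to f^*T_E$. Second, \eqref{sectionsTM} forces $H^0(T_M)\to H^0(f^*T_E)$ to vanish. Both arguments rest on the same input \eqref{sectionsTM}. The paper's route is slightly shorter and needs no analysis of a connecting homomorphism. Yours establishes along the way that \eqref{eqn:extensionF} is non-split, and shows why: a splitting would produce a vector field with non-zero horizontal component.

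One point of phrasing. The hypothesis to contradict is that the sequence splits, equivalently that $1\in H^0(M,\sO_M)$ lifts to $H^0(M,\sF)$, rather than that $\sF$ is abstractly isomorphic to $\sO_M(C_+)\oplus\sO_M$. The two happen to coincide here because $H^0(M,\sO_M(-C_+))=0$. The nowhere-vanishing property of the lift plays no role.
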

\begin{proof}
By \eqref{sectionsTM} and Lemma \ref{lemma-cohomologyKM} we know that 
$$
h^0(M,T_M)=h^0(M,-K_M)=h^0(M, C_+)=1+d
$$ 
Moreover since $\deg L_- = \deg L<0$ it follows from \eqref{eqn-defineF}
that $h^0(M,T_M)=h^0(M,\sF)$ so $h^0(M,\sF)=1+\mathrm{deg}(L^*_-)$. Pushing forward the exact sequence
\eqref{eqn:extensionF}
we obtain an extension
$$
0 \rightarrow L^* \oplus \sO_E \rightarrow f_* \sF \rightarrow \sO_E \rightarrow 0
$$
and $R^j f_* \sF=0$ for all $j \geq 1$. In particular we have $H^k(M, \sF) = H^k(E, f_* \sF)$.
Moreover, we have $c_1(f_* \sF)=d$ and therefore by Riemann-Roch
$$
h^1(E, f_* \sF) = h^0(E, f_* \sF) - \chi(E, f_* \sF) = 1+d - d = 1.
$$
\end{proof}

\begin{proposition}\label{prop:F_CisSerre}
In the Setup \ref{setup:ruled}   the restriction $\sF|_{C_-}$ is isomorphic to the Serre bundle $\mathscr{F}_2$ (cf.
Example \ref{example:Serre}).
In particular, $\sF$ is nef and big, but not semiample.
\end{proposition}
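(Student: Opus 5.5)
Restricting the extension \eqref{eqn:extensionF} to $C_-$ gives
$$
0 \rightarrow \sO_M(C_+)|_{C_-} \rightarrow \sF|_{C_-} \rightarrow \sO_{C_-} \rightarrow 0 .
$$
Since $C_+ \cap C_- = \emptyset$, we have $\sO_M(C_+)|_{C_-} \simeq \sO_{C_-}$. So $\sF|_{C_-}$ is an extension of $\sO_{C_-}$ by $\sO_{C_-}$ on the elliptic curve $C_- \simeq E$. By Example \ref{example:Serre} it is either $\sO_{C_-}^{\oplus 2}$ or $\mathscr F_2$. The whole point is therefore to show that the extension does not split, i.e.\ that $h^0(C_-, \sF|_{C_-}) = 1$ rather than $2$.

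I would get non-splitting from the tangent bundle description of $\sF|_{C_-}$. Restrict \eqref{eqn-defineF} to $C_-$; since $\mathrm{Tor}_1(L_-, \sO_{C_-}) \simeq L_- \otimes \sO_M(-C_-)|_{C_-} \simeq \sO_{C_-}$ (because $N_{C_-/M} \simeq L_-$), we get an exact sequence
$$
0 \rightarrow \sO_{C_-} \rightarrow \sF|_{C_-} \rightarrow T_M|_{C_-} \rightarrow L_- \rightarrow 0 .
$$
The image of $\sF|_{C_-} \to T_M|_{C_-}$ is $T_{C_-} \simeq \sO_{C_-}$, so $\sF|_{C_-}$ is an extension of $T_{C_-}$ by the line bundle $\sO_{C_-} \simeq L_- \otimes N^*_{C_-/M}$.

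The main obstacle is to identify this extension class and show it is nonzero. It is the class of the first-order neighbourhood of $C_-$. Concretely, a splitting would give a vector field along $C_-$ lifting $T_{C_-}$ to a section of $\sF$ in the first infinitesimal neighbourhood. I would compute it with \v{C}ech cocycles via Fact \ref{fact-cech}. In local coordinates $(z, s)$ near $C_- = \{s = 0\}$ with transition $s' = g_{ij}(z) s$, the sheaf $\sF$ is generated by $\partial_z$ and $s\partial_s$. Changing charts gives $\partial_{z} = \partial_{z'} + (g_{ij}'/g_{ij})\, s'\partial_{s'}$. The extension class is then represented by the cocycle $\{g_{ij}'/g_{ij}\} \in H^1(C_-, \sO_{C_-})$, i.e.\ the image of $c_1(L)$ under $\mathrm{dlog}\colon H^1(\sO^*) \to H^1(\Omega^1) \simeq H^1(\sO)$ on $E$. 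This is the Atiyah class of $L$, which is nonzero since $\deg L = -d \neq 0$. With factors of automorphy, Example \ref{example:automorphy} makes this explicit: the logarithmic derivative $-2\pi i m$ gives a non-coboundary cocycle on $\Lambda$. Hence $\sF|_{C_-} \simeq \mathscr F_2$.

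For the consequences: nef and big are Lemma \ref{lemma:defineF}. If $\sF$ were semiample, its restriction to $C_-$ would be semiample, hence $\sO_{\PP(\mathscr F_2)}(1)$ would be semiample. That is impossible because $h^0(\sO_{\PP(\mathscr F_2)}(k)) = 1$ for all $k$ while $\sO_{\PP(\mathscr F_2)}(1)$ is not numerically trivial (Example \ref{example:Serre}).
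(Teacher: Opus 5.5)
Your proof is correct, but it follows a different route from the paper.

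The paper never computes an extension class. It twists \eqref{eqn:extensionF} by $\sO_M(-C_-)$ and gets $h^0(M,\sF\otimes\sO_M(-C_-))=h^0(E,L^*)=d$ and $h^1(M,\sF\otimes\sO_M(-C_-))=h^1(E,L^*)=0$. It then reads off $h^0(C_-,\sF|_{C_-})=h^0(M,\sF)-d=1$ from the restriction sequence $0\to\sF\otimes\sO_M(-C_-)\to\sF\to\sF|_{C_-}\to 0$, using $h^0(M,\sF)=d+1$ from Lemma~\ref{lem:cohomologyF}. This is quick given the earlier lemmas. However, the input $h^0(M,\sF)=d+1$ ultimately rests on \eqref{sectionsTM}, i.e.\ on the non-splitting of $0\to -K_M\to T_M\to\sO_M\to 0$, for which the paper invokes Potters' theorem.

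Your argument is local. The $\mathrm{Tor}$ sequence exhibits $\sF|_{C_-}$ as an extension of $T_{C_-}$ by $N_{C_-/M}\otimes N^*_{C_-/M}\simeq\sO_{C_-}$. The chart computation then identifies its class with the $d\log$ (Atiyah) class of $N_{C_-/M}\simeq L$, which is nonzero because its trace is $\pm\deg L$ \cite[III, Ex.~7.4]{Har77}.

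This route buys you two things:
\begin{enumerate}
\item It is self-contained and gives more. Non-splitting of $\sF|_{C_-}$ forces \eqref{eqn:extensionF} to be non-split, which yields $h^0(M,\sF)=h^0(M,T_M)=d+1$. Hence it recovers the non-splitting of the sequence for $T_M$ without Potters.
\item It generalizes. Redoing the chart computation with general transitions $z'=\phi(z,s)$, $s'=s\,g(z,s)$ shows that the correction terms lie in $s\sF$. So the class depends only on the normal bundle, and the argument applies to any smooth curve of nonzero self-intersection on a surface.
\end{enumerate}
The cost is the \v{C}ech bookkeeping and the standard comparison between the $d\log$ class and the degree.

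One small imprecision: Example~\ref{example:automorphy} is the degree-one case. For $\deg L=-d$ the relevant logarithmic derivative is $\pm 2\pi i\,d\,m$, which is again not a coboundary.
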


\begin{proof}
Restricting \eqref{eqn:extensionF} to $C_-$ we obtain an extension
$$
0\to \sO_{C_-}  \to  \sF|_{C_-} \to \sO_{C_-}  \to 0.
$$
Then either $\sF_{|C_-}\cong  \sO_{C_-} ^{\oplus 2}$, or $\sF_{|C_-} \cong \mathscr{F}_2$ and we are in
the second case if and only if $h^0(C_-, \sF|_{C_-} )=1$. 

Twisting  \eqref{eqn:extensionF}  with $\sO_M(-C_-)$ we obtain
$$
0 \rightarrow \sO_M(C_+ -C_-) \simeq f^* L^* \rightarrow \sF \otimes \sO_M(-C_-) \rightarrow \sO_M(-C_-) \rightarrow 0
$$
and compute that $h^0(M, \sF \otimes \sO_M(-C_-))=h^0(E, L^*)=d$
and $h^1(M, \sF \otimes \sO_M(-C_-))=h^1(E, L^*)=0$. 
Now consider the exact sequence
$$
0 \rightarrow \sF \otimes \sO_M(-C_-) \rightarrow \sF \rightarrow \sF|_{C_-} \rightarrow 0.
$$
Then the long exact sequence in cohomology and Lemma \ref{lem:cohomologyF} yield
the first statement.
For the second statement, note that the restriction of a semiample vector bundle
to any subvariety is semiample. Yet the Serre bundle is not semiample (cf.
Example \ref{example:Serre}).
\end{proof}

We denote by $\pi\colon \PP(T_M) \to M$ the projectivisation of the tangent bundle, and by 
$$
C_0 := \PP(L_-) \subset \PP(T_M)
$$
the curve corresponding to the quotient line bundle \eqref{negative-quotient}.
Let $\zeta:=c_1(\sO_{\PP(T_M)}(1))$ be the tautological class on $\PP(T_M)$.

\begin{lemma}
In the Setup \ref{setup:ruled},
let $N \subset \PP(T_M)$ be a curve such that $\zeta \cdot N<0$. Then $N=C_0$.
\end{lemma}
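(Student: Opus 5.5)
Let $N \subset \PP(T_M)$ be an irreducible curve with $\zeta \cdot N < 0$. I would first show that $N$ is not contracted by $\pi$, then that $\pi(N)=C_-$, and finally use the splitting of $T_M|_{C_-}$ to force $N = C_0$.

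If $\pi(N)$ is a point, then $N$ is a line in a fibre $\PP^1$ and $\zeta \cdot N = 1 > 0$. So $C := \pi(N)$ is a curve. Let $\nu\colon \tilde C \to C$ be the normalisation of $C$. The map $N \to C$ factors through a finite map $\tilde N \to \PP(\nu^* T_M)$ from the normalisation of $N$, and this map corresponds to a quotient line bundle $\nu^*T_M \to Q$ pulled back to $\tilde N$. Its degree computes $\zeta\cdot N$ up to the positive factor $\deg(\tilde N \to \tilde C)$. Hence $\zeta\cdot N<0$ forces $\nu^*T_M$ to have a quotient of negative degree after a finite base change, so $T_M|_C$ is not nef.

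Next I show that $C=C_-$, using the exact sequence
$$0 \rightarrow -K_M \rightarrow T_M \rightarrow \sO_M \rightarrow 0.$$
If $C \neq C_-$, then $-K_M \cdot C = (C_-+C_+)\cdot C \geq 0$, since the irreducible curve $C$ is not contained in $C_-$ and $C_+$ is nef. (Here $C_+ \cdot C_- = 0$ and $C_+^2 = d > 0$, so $C_+$ is nef; if $C = C_+$ the intersection is still nonnegative.) Thus $T_M|_C$ is an extension of the nef line bundle $\sO_C$ by a nef line bundle, hence nef. This contradicts the previous step, so $C=C_-$.

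Now $N \subset \pi^{-1}(C_-) = \PP(T_M|_{C_-}) = \PP(\sO_{C_-}\oplus L_-)$, a ruled surface over $C_-\simeq E$ on which $\zeta$ restricts to the tautological class $\xi$. This surface has negative section $C_0=\PP(L_-)$ with $\xi\cdot C_0 = \deg L_- = -d$ and $C_0^2 = -d$, because the normal bundle of the section given by the quotient $\sO\oplus L_-\to L_-$ is $L_-\otimes \sO^* = L_-$. Suppose $N \neq C_0$, and write $N \equiv aC_0 + b\ell$ with $\ell$ a fibre. Then $a = N\cdot \ell \geq 0$ and $N\cdot C_0 = -da + b \geq 0$, so $b \geq da$. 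This gives
$$\zeta\cdot N = a(-d) + b \geq 0,$$
a contradiction. Hence $N=C_0$.

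I expect the first step to be the main obstacle: converting $\zeta\cdot N<0$ into non-nefness of $T_M|_C$ when $C$ is singular or $N\to C$ has higher degree. The normalisation and base-change argument above handles this, and nefness is preserved under finite pullback, so the argument closes.
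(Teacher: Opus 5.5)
Your proof is correct, but it reaches $\pi(N)=C_-$ by a different route than the paper. The paper uses the elementary transform sequence $0\to\sF\to T_M\to L_-\to 0$ together with the nefness of $\sF$ from Lemma~\ref{lemma:defineF}. For a curve $C\neq C_-$ the map $\sF|_C\to T_M|_C$ is generically surjective, so $T_M|_C$ is nef.

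You instead use the relative tangent sequence $0\to -K_M\to T_M\to f^*T_E\to 0$. You observe that $-K_M\simeq C_-+C_+$ has nonnegative degree on every curve other than $C_-$, because $C_+$ is nef. Your route is more self-contained, since it needs neither the elementary transform nor Lemma~\ref{lemma:defineF}. The paper's route instead reuses the bundle $\sF$, which it needs anyway for Theorem~\ref{thm:not-finite}.

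Your last step on the ruled surface $\PP(\sO_{C_-}\oplus L_-)$ makes explicit what the paper compresses into one sentence. In fact $\zeta|_{\pi^{-1}(C_-)}\equiv C_0$ there, so $\zeta\cdot N=C_0\cdot N\geq 0$ for every curve $N\neq C_0$ in that surface.

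Your first step can be shortened. Nefness of $T_M|_C$ means by definition that $\zeta$ is nef on $\PP(T_M|_C)=\pi^{-1}(C)\supset N$, so $\zeta\cdot N<0$ contradicts it directly. No normalisation argument is needed beyond the standard fact that extensions of nef bundles are nef.
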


\begin{proof}
Since $\sF$ is nef by Lemma \ref{lemma:defineF}, it is clear from the exact sequence
\eqref{eqn-defineF} that $\pi(N)=C_-$. Yet $T_M \otimes \sO_{C_-} \simeq \sO_{C_-} \oplus L_-$,
so $C_0$ is the unique $\zeta$-negative curve mapping onto $C_-$.
\end{proof}

The exact sequence \eqref{eqn-defineF} allows us to see the vector bundle $\sF$ as an elementary transform (in the sense of Maruyama \cite[Theorem 1.4]{Mar82}) of $T_M$: this means that we have a commutative diagram
\begin{equation} \label{diagram-transform}
\xymatrix{
& Y \ar[rd]^{\nu} \ar[ld]_\tau &
\\
\PP(\sF) \ar[d] & & \PP(T_M) \ar[d]^\pi
\\
M \ar[rr] & & M
}
\end{equation}
where $\holom{\nu}{Y}{\PP(T_M)}$ is the blow-up along the curve $C_0$ and
$\holom{\tau}{Y}{\PP(\sF)}$ is the blow-down of the strict transform of $\PP(T_M|_C)$ onto a curve.
Denote by $F$ the exceptional divisor of $\nu$, then one has
$$
\nu^* \sO_{\PP(T_M)}(1)  \simeq \tau^* \sO_{\PP(\sF)}(1) + F
$$
and $\tau$ maps the exceptional divisor $F$ isomorphically onto $\PP(\sF \otimes \sO_{C_-})$.

\begin{lemma}\label{lem:normalC_0}
In the Setup \ref{setup:ruled},
we have $N^*_{C_0/\PP(T_M)}\cong  L^*\otimes \mathscr{F}_2$.
\end{lemma}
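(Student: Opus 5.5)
The plan is to compute the normal bundle $N_{C_0/\PP(T_M)}$ in two steps. First I show it is an extension of $L$ by $L$. Then I use the elementary transformation \eqref{diagram-transform} to show that this extension is non-split.

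\emph{Step 1 (the extension).} Since $C_0 \subset \PP(T_M|_{C_-}) \subset \PP(T_M)$, the normal bundle sequence of this chain of inclusions is
$$
0 \rightarrow N_{C_0/\PP(T_M|_{C_-})} \rightarrow N_{C_0/\PP(T_M)} \rightarrow N_{\PP(T_M|_{C_-})/\PP(T_M)}|_{C_0} \simeq \pi^*N_{C_-/M}|_{C_0} \rightarrow 0 .
$$
The right-hand term is $L_- \simeq L$. The curve $C_0$ is the section of $\PP(T_M|_{C_-}) = \PP(\sO_{C_-}\oplus L_-)$ given by the quotient $\kappa$, whose kernel is $T_{C_-} \simeq \sO_{C_-}$. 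The relative tangent bundle along such a section is $\mathcal{H}om(\ker \kappa, L_-) \simeq L$. Therefore
$$
0 \rightarrow L \rightarrow N_{C_0/\PP(T_M)} \rightarrow L \rightarrow 0 .
$$
Dualising, $N^*_{C_0/\PP(T_M)} \simeq L^* \otimes G$, where $G$ is an extension of $\sO_{C_-}$ by $\sO_{C_-}$. By Example \ref{example:Serre}, $G$ is either $\sO_{C_-}^{\oplus 2}$ or $\mathscr F_2$. It remains to exclude the split case.

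\emph{Step 2 (non-splitting).} The exceptional divisor of the blow-up $\nu$ along $C_0$ is $F \simeq \PP(N^*_{C_0/\PP(T_M)})$ (Grothendieck projectivisation of the conormal bundle), which is isomorphic to $\PP(G)$ over $C_0 \simeq C_-$. By the discussion after \eqref{diagram-transform}, $\tau$ maps $F$ isomorphically onto $\PP(\sF|_{C_-})$. This is $\PP(\mathscr F_2)$ by Proposition \ref{prop:F_CisSerre}. Since the diagram commutes over $M$, the isomorphism $\tau|_F$ is compatible with the projections to $C_-$. Hence $\PP(G) \simeq \PP(\mathscr F_2)$ as $\PP^1$-bundles over $C_-$.

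Suppose $G$ were trivial. Then $\PP(\mathscr F_2) \simeq C_- \times \PP^1$ over $C_-$. Two rank-two bundles with isomorphic projectivisations differ by a twist, so $\mathscr F_2 \simeq M^{\oplus 2}$ for some line bundle $M$, necessarily of degree $0$. This is impossible:
\begin{itemize}
\item If $M \not\simeq \sO$, then $h^0(\mathscr F_2 \otimes M^{-1}) = 0$, since $\mathscr F_2$ is an extension of $\sO$ by $\sO$; but $h^0(M^{\oplus 2}\otimes M^{-1}) = 2$.
\item If $M \simeq \sO$, then $h^0(\mathscr F_2) = 1 \neq 2$.
\end{itemize}
Alternatively, one can note that $\mathscr F_2$ is indecomposable. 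Hence $G \simeq \mathscr F_2$, and $N^*_{C_0/\PP(T_M)} \simeq L^* \otimes \mathscr F_2$.

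The main obstacle is Step 2, specifically verifying carefully that $\tau|_F$ is an isomorphism over $C_-$. The paper asserts this with the diagram \eqref{diagram-transform}, and commutativity gives the compatibility. A secondary point is the orientation convention: the exceptional divisor is $\PP$ of the conormal bundle in Grothendieck notation. Any twisting ambiguity is harmless, because only the isomorphism class of the projective bundle is used.
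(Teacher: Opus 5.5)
Your proposal is correct and follows the paper's proof. The paper also uses the (co)normal sequence for $C_0 \subset \pi^*C_- \subset \PP(T_M)$ to obtain an extension of $L^*$ by $L^*$, and then excludes the split case via $\PP(N^*_{C_0/\PP(T_M)}) \cong F \cong \PP(\sF|_{C_-}) \cong \PP(\mathscr F_2)$; your argument that $\PP(\mathscr F_2)$ is not the trivial $\PP^1$-bundle over $C_-$ spells out the step the paper leaves implicit when it concludes that the sequence is unsplit.
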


\begin{proof}
Set $S:=\pi^* C_-$, then 
we have $C_0 \subset S \subset \PP(T_M)$ and therefore an exact sequence 
\begin{equation}\label{eqn:normalx3}
0\to N^*_{S/\PP(T_M)} \otimes \sO_{C_0} \to  N^*_{C_0/\PP(T_M)} \to N^*_{C_0/S} \to 0,
\end{equation}
which reads 
$$
0\to L^* \to N^*_{C_0/\PP(T_M)} \to L^* \to 0.
$$ 
Yet 
$$
\PP(N^*_{C_0/\PP(T_M)}) \cong F \cong \PP(\sF \otimes \sO_{C_-}) \cong \PP(\mathscr{F}_2)
$$
where in the last step we used Proposition \ref{prop:F_CisSerre}.
Thus \eqref{eqn:normalx3} is unsplit and we obtain the statement.
\end{proof}

We can now prove our first main result:

\begin{proof}[Proof of Theorem \ref{thm:not-finite}]
We use the notation of the Setup \ref{setup:ruled} and the maps summarised in Diagram \eqref{diagram-transform}.
The vector bundle $T_M$ is big since it contains the subsheaf $\sF$ which is nef and big by Lemma \ref{lemma:defineF}.
We claim that
$$
\nu^* \sO_{\PP(T_M)}(1) \simeq \tau^* \sO_{\PP(\sF)}(1) + F
$$
is the divisorial Zariski decomposition of the big divisor $\nu^* \sO_{\PP(T_M)}(1)$. Assuming this for the time being, let us see how to conclude: by \cite[Theorem 5.5]{Bou04}
$$
H^0(Y, \nu^* \sO_{\PP(T_M)}(k))
\simeq H^0(Y, \tau^* \sO_{\PP(\sF)}(k))
$$
for every $k \in \N$. Thus we have isomorphisms
\begin{multline}
R(M, T_M) \simeq R(\PP(T_M), \sO_{\PP(T_M)}(1))
\simeq R(Y, \nu^* \sO_{\PP(T_M)}(1))
\\
\simeq R(Y, \tau^* \sO_{\PP(\sF)}(1))
\simeq R(\PP(\sF), \sO_{\PP(\sF)}(1)).
\end{multline}

The line bundle $\sO_{\PP(\sF)}(1)$ is nef and big but not semiample by Proposition \ref{prop:F_CisSerre}. Thus its ring of global sections is not finitely generated
by \cite[Theorem 2.3.15]{Laz04a}.

{\em Proof of the claim.} 
Let $\holom{q}{\PP(N_{C_0/\PP(T_M)}^*)}{C_0}$ be the ruling given by the blow-up~$\nu$. Since $(\nu^* \sO_{\PP(T_M)}(1))|_{F} \simeq q^* L$ is not pseudoeffective, it is clear
that $F$ is in the negative part of the divisorial Zariski decomposition of $\nu^* \sO_{\PP(T_M)}(1)$. Thus we can write
$$
\nu^* \sO_{\PP(T_M)}(1) = P + \lambda F
$$
where $\lambda>0$ and $P$ is a big class such that $P|_{F}$ is pseudoeffective.

Since $N_{C_0/\PP(T_M)}^* \simeq L^* \otimes \mathscr{F}_2$, by Proposition \ref{prop:F_CisSerre} 
we have
$$
\sO_{F}(-F) \simeq \sO_{\PP(N_{C_0/\PP(T_M)}^*)}(1) \simeq \sO_{\PP(\mathscr{F}_2)}(1) \otimes q^* L^*.
$$
Let us now show that $\lambda = 1$.
We have
$$
(\nu^* \sO_{\PP(T_M)}(1) - \lambda F)|_{F}
=
\lambda \sO_{\PP(\mathscr{F}_2)}(1) +  (1-\lambda) q^* L,
$$
and the pseudoeffective cone of $\PP(\mathscr{F}_2)$ is generated by $\sO_{\PP(\mathscr{F}_2)}(1)$
and a fibre of the ruling, so the divisor above is pseudoeffective if and only if
$1-\lambda \leq 0$. Thus the coefficient of $F$ in the negative part of $\nu^* \sO_{\PP(T_M)}(1)$ is $\lambda \geq 1$, and therefore
$$
H^0(Y, \nu^* \sO_{\PP(T_M)}(k))
=
H^0(Y, \nu^* \sO_{\PP(T_M)}(k)-kF)
$$
for every $k \in \N$. But $\nu^* \sO_{\PP(T_M)}(1)-F=\tau^* \sO_{\PP(\sF)}(1)$ is nef
and big, thus by \cite[Theorem 5.5]{Bou04} the divisor $\nu^* \sO_{\PP(T_M)}(1)-F$ is the positive part of the divisorial Zariski decomposition of $\nu^* \sO_{\PP(T_M)}(1)$.
\end{proof}

\begin{remark} \label{remark:extensionbig}
For completeness sake let us indicate how to extend the results above to the canonical extension 
$$
0 \rightarrow \sO_M \rightarrow V^\omega \rightarrow T_M \rightarrow 0
$$
associated to some K\"ahler class $\omega$ on $M$:
let $\beta$ be the composite morphism $V^\omega\to T_M \to L_-$, and set $\sG:=\mathrm{ker}(\beta)$. By \cite[Theorem 1.4]{Mar82} the sheaf $\sG$ is locally free of rank 3, by construction we obtain a commutative diagram

\begin{equation}\label{bigdiagram}
\centering
\begin{tikzcd} 
       &                                    & 0\arrow [d]        &  0\arrow [d]       & \\
0 \rar & \sO_M \rar \arrow[d,equal] & \sG \rar \arrow[d] & \sF \rar \arrow[d] & 0 \\
0 \rar & \sO_M \rar & V^\omega \rar \arrow[d] \arrow[dr, dashed, "\beta"] & T_M \rar \arrow{d} & 0 \\
       &            &              L_- \arrow[r,equal] \arrow[d]     & L_-    \arrow[d]             &          \\
       &            &                0   & 0                            &.
\end{tikzcd}
\end{equation}

The vector bundle $\sG$ is nef and big since it is an extension of the nef and big vector bundle $\sF$ by $\sO_M$.
The tautological bundle of $\sG$ is not semiample since its restriction to 
$\PP(\sF|_{C-}) \simeq \PP(\mathscr F_2)$ is not semiample. 
The inclusion $\sG \subset V^\omega$ implies that $V^\omega$ is big.
Arguing as in the proof of Theorem \ref{thm:not-finite} one obtains
that
$$
R(M, V^\omega) :=  \bigoplus_{k \in \N} H^0(M, S^k V^\omega)
$$
is not finitely generated. Since we will not use these results we leave the proof to the interested reader.
\end{remark}

\section{Universal cover of the canonical extension} \label{section:universal}

\begin{setup} \label{setup1} 
Let $\Lambda = \Z \oplus \Z \tau \subset \C$ be a lattice, and let
$E := \C/\Lambda$ the elliptic curve. 
Let $L$ be a line bundle of degree $-1$ on $E$, so $L^* \simeq \sO_E(p)$ for some point $p \in E$.
We keep the notation 
$$
f\colon M :=\PP(\sO_E\oplus L)\to E,
$$
and  $C_-:=\PP(L), C_+:=\PP(\sO_E)$ introduced in Section \ref{section:geometry}.
We fix a K\"ahler class $\omega$ on $M$ and denote by
\begin{equation}
\label{eqn-canonicalext}
0 \rightarrow \sO_M \rightarrow V^\omega \rightarrow T_M \rightarrow 0
\end{equation}
the corresponding extension. For simplicity of notation we set
$$
Z := Z_M^\omega := \PP(V^\omega)\setminus\PP(T_M)
$$
and denote by
 $$
 \holom{\pi}{Z}{M}
 $$ 
 the restriction of $\holom{\pi}{\PP(V^{\omega})}{M}$ to $Z$.

Let $\holom{p}{\C}{E}$ be the universal cover. Taking the fibre products, we obtain a commutative diagram
 \begin{equation}\label{diagram-universal}
\centering
\begin{tikzcd} 
\tilde Z \arrow[r, "p_Z"] \arrow[d, "\tilde \pi"] & Z \arrow[d, "\pi"]
\\
\tilde M \arrow[r, "p_M"] \arrow[d, "\tilde f"] & M \arrow[d, "f"]
\\
\tilde E \simeq \C \arrow[r, "p"]  & E
\end{tikzcd}
\end{equation}

Since $p_Z$ and $p_M$ are the universal covers, the group $\Lambda$ acts on $\tilde Z$ and $\tilde M$ in a way that is equivariant for the maps in the diagram.
\medskip

In what follows, we will denote by $m \tau + n$ with  $m, n \in \Z$ the elements of $\Lambda$. 
We denote by $z \in \C$ the coordinate on $\tilde E \simeq \C$. For clarity's sake
we will denote $\C_z := \tilde E$.

Finally let $- \xi_C\in H^1(M,\Omega_M)$ be the cohomology class associated with the curve $C_-$\footnote{The unusual choice of notation is convenient for our computations.}. 
Since $H^1(M,\Omega_M) = f^* H^1(E, \Omega_E) \oplus \R \xi_C$ we can write
\begin{equation}
\label{decompose-omega}
\omega :=\rho \xi_C+ f^*\beta_\omega,
\end{equation}
where $\rho<0$ and $\beta_\omega \in H^1(E,\Omega_E^1)$. 
Recall that $H^1(E,\Omega_E^1) \simeq \C$, and denote by $K \in \C$ the complex number corresponding to $\beta_\omega$. 
Since $\rho \xi_C$ is a positive multiple of the class of the negative section $C_-$ and $\omega$ is a K\"ahler class we have $K \neq 0$.
\end{setup}

The goal of this section is to prove the following proposition.

\begin{proposition} \label{prop:key}
We have an isomorphism 
\[
F\colon\widetilde Z\xrightarrow{\sim}\C_z\times\C_u\times Q.
\]
Moreover the $\Lambda$-action on $\C_z\times\C_u\times Q$ is given  by
\begin{align*}
\lambda &\cdot (z,u,[x_0:x_1],[w_0: w_1])\\
&=\Bigg(z+\lambda, u+m\Bigg(K+2\pi i \rho \frac{x_1 w_0}{x_0w_1-x_1w_0}\Bigg), [x_0: b_\lambda(z) x_1],[w_0: b_\lambda(z) w_1]\Bigg).
\end{align*}
\end{proposition}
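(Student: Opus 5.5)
We will build $F$ fibrewise over $\tilde M$, in local coordinates that make the $\Lambda$-action explicit. The first step is to describe $\tilde M$. Pulling back along $p$, the bundle $L^*\simeq\sO_E(p)$ becomes trivial on $\C_z$, with $\Lambda$ acting through the factor of automorphy $b_\lambda$ of Example \ref{example:automorphy}. Hence
$$
\tilde M\simeq \C_z\times\PP^1_{[x_0:x_1]}, \qquad \lambda\cdot(z,[x_0:x_1])=(z+\lambda,[x_0:b_\lambda(z)x_1]).
$$
The lift of $C_+$ is $\{x_1=0\}$ and the lift of $C_-$ is $\{x_0=0\}$.

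Next we compute the extension class. We pull back the canonical extension \eqref{eqn-canonicalext} to $\tilde M$ and represent it by a \v{C}ech cocycle (Fact \ref{fact-cech}) on the cover $\{x_0\neq0\},\{x_1\neq0\}$, with values in $\Omega_{\tilde M}$. By the decomposition \eqref{decompose-omega}, the class $\omega$ is $\rho$ times the class $\xi_C$ plus $f^*\beta_\omega$.
\begin{itemize}
\item On $\tilde M$ the class $\xi_C$ is represented on the overlap by $d\log t$, where $t=x_1/x_0$ is the fibre coordinate. This is the standard $c_1(\sO(1))$ cocycle on $\PP^1$, corrected by $d\log b_\lambda = -2\pi i m\,dz$ (Example \ref{example:automorphy}) when we pass between the translates.
\item The class $f^*\beta_\omega$ becomes exact upstairs on $\C_z$. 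It is encoded instead in the equivariance: it contributes $Km$ to the action.
\end{itemize}
Concretely, $Z$ is the space of holomorphic splittings of $V^\omega\to T_M$, i.e. an affine $\Omega_M$-bundle. Upstairs, $\tilde Z$ is the affine bundle of connection-like data $\sigma$ with $\sigma_j-\sigma_i=c_{ij}$. We write a point of $\tilde Z$ over $(z,t)$ as a $1$-form $u\,dz+y\,dt$, with the transition on the overlap twisted by $\rho\,d\log t$.

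We then identify the fibre directions with $\C_u\times Q$. In the chart $x_0\neq0$ the coordinates are $(z,t,u,y)$; in the chart $x_1\neq0$ they are $(z,v,u,y_\infty)$ with $v=1/t$. The twisted gluing of the $dt$-component reads
$$
y_\infty\,dv = y\,dt+\rho\,\frac{dt}{t},
$$
which gives $y_\infty=-t^2y-\rho t$. This is exactly the transition rule of Lemma \ref{lemma:coordinates-quadric}. Hence the $(t,y)$-part glues to $Q$, and $u$ is a global coordinate. This produces the isomorphism $F$.

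Finally we compute the $\Lambda$-action. On the base it is $(z,t)\mapsto(z+\lambda,b_\lambda(z)t)$, and we transport the $1$-form $u\,dz+y\,dt$ under this map. Since $d(b_\lambda t)=b_\lambda\,dt-2\pi i m\,b_\lambda t\,dz$, the $dt$-coefficient rescales, which is the same as scaling $w_1$ by $b_\lambda$. The $dz$-coefficient picks up a term $2\pi i\rho m\cdot(ty/\rho)$-type correction, coming from the cocycle term $\rho\,d\log t$ together with $y$. In quadric coordinates this correction is $2\pi i\rho\, x_1w_0/d_Q$. Adding the $Km$ contribution from $\beta_\omega$ yields the stated formula.

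The main obstacle is choosing the cocycle representatives and the lifts $\sigma_i$ consistently with the $\Lambda$-equivariance, so that the constants come out exactly as stated: $K$ rather than a multiple of it, and the sign and factor $2\pi i\rho$. We will handle this by fixing a Dolbeault representative $\rho\,\partial\bar\partial\log h+K\,dz\wedge d\bar z/\mathrm{Im}\tau$ and converting it to \v{C}ech data. Then we verify the formula directly, by checking that the stated action preserves $d_Q$-compatibility and satisfies the cocycle identity $(\lambda+\mu)=\lambda\circ\mu$, using the factor of automorphy relation.
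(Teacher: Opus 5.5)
\textbf{There is a genuine gap: the $\Lambda$-equivariance of the reference lift is never derived.} Your outline follows the paper's route. You take $\widetilde M\cong\C_z\times\PP^1$ with the $b_\lambda$-action, use fibre coordinates over $\{x_0\neq0\}$ and $\{x_1\neq0\}$, glue via $y_\infty=-t^2y-\rho t$ into $Q$ through $\psi_0,\psi_\infty$, and transport $u\,dz+y\,dt$ under $(z,t)\mapsto(z+\lambda,b_\lambda(z)t)$. But the step that carries the content is missing.

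A point over $\widetilde M_0$ is $\sigma_0+u\,dz+y\,dt$ for a chosen lift $\sigma_0$ of $1$, and your notation drops $\sigma_0$. In these coordinates the proposition amounts to two conditions:
\begin{enumerate}
\item[(i)] $\sigma_\infty-\sigma_0=-\rho\,dt/t$ exactly (a $dz$-term would make $u_\infty\neq u$);
\item[(ii)] $\lambda\cdot\sigma_0(x)=\sigma_0(\lambda x)+Km\,dz$.
\end{enumerate}
The cocycle on $\widetilde M$ gives (i), but it fixes $\sigma_0$ only up to adding $g(z)\,dz$. It says nothing about (ii). A priori $\lambda\cdot\sigma_0-\sigma_0=g_\lambda(z)\,dz$ for some additive factor of automorphy $g$, and you must show it is cohomologous to $\lambda\mapsto Km$, with $K$ normalised as in Lemma \ref{lemma:cechBK}. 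Writing ``$f^*\beta_\omega$ contributes $Km$'' restates the claim; it is not an argument.

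The paper obtains (ii) by setting $\sigma_0:=p_M^*\sigma_{i,0}-Km_i(z)\,dz$. Here the $p_M^*\sigma_{i,0}$ are $\Lambda$-invariant pull-backs of lifts on $M\setminus C_-$ whose cocycle is $Km_{ij}f^*\theta$ (Lemma \ref{lemma:cech-omega}), and $m_i(z+\lambda)=m_i(z)+m$. Similarly $\sigma_\infty:=p_M^*\sigma_{i,\infty}-Km_i(z)\,dz-\rho\,da_i/a_i$.

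Your fallback cannot replace this. Checking $d_Q$-compatibility and $(\lambda+\mu)\cdot P=\lambda\cdot(\mu\cdot P)$ says nothing about the deck action. Since $x_1w_0/d_Q$ is invariant, the formula is a group action for \emph{any} constants in place of $K$ and $2\pi i\rho$. Your Dolbeault form $K\,dz\wedge d\bar z/\mathrm{Im}\tau$ is also not normalised like the paper's $K$. With the usual \v{C}ech--Dolbeault conventions, converting $\{Km_{ij}\theta\}$ gives $\pm\frac{iK}{2\,\mathrm{Im}\tau}\,dz\wedge d\bar z$.

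\textbf{Your account of the $\rho$-dependence is also off, and taken literally it gives the wrong constant.} In the $t$-chart, $\rho\xi_C$ contributes nothing to the transformation of $\sigma_0$. Its cocycle vanishes on $M_{i,0}\cap M_{j,0}$, because $\sO_M(C_-)$ is canonically trivial off $C_-$. The term $2\pi i m\,ty$ comes solely from $y'\,d(b_\lambda t)=y'(b_\lambda\,dt+b_\lambda'(z)\,t\,dz)$ with $y'=y/b_\lambda$. It does not come from ``the cocycle term $\rho\,d\log t$''. If you insert a $\rho\,d\log b_\lambda$ correction into the transformation of $\sigma_0$, as your first bullet suggests, you get $K\pm2\pi i\rho$ in place of $K$ there. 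That correction belongs to $\sigma_\infty$: from the push-forward $\lambda_*(dt/t)=dt/t+2\pi i m\,dz$ one gets $\lambda\cdot\sigma_\infty(x)=\sigma_\infty(\lambda x)+(K-2\pi i\rho)m\,dz$. This is exactly what the stated formula encodes in the $\infty$-chart, where $x_1w_0/d_Q=-(vy_\infty+\rho)/\rho$.
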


The proof requires some preliminary work.  

\begin{lemma}
We have $\widetilde M\simeq \C_z\times\PP^1$ and, in the coordinates $(z,[x_0:x_1])$, the $\Lambda$-action is given by
\begin{equation}
\label{action:tildeM}
\lambda\cdot(z,[x_0:x_1])=(z+\lambda,[x_0:b_\lambda(z)x_1]),
\end{equation}
for every $\lambda\in\Lambda$ and $b_\lambda(z)$ the factor of automorphy defined in Example \ref{example:automorphy}.
\end{lemma}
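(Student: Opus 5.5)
The plan is to pull back the rank-two bundle along the universal cover and trivialise it using the factor of automorphy for $L^* \simeq \sO_E(p)$.

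Since $p\colon \C_z \to E$ is the universal cover and $\tilde M = M \times_E \C_z$, we have $\tilde M \simeq \PP(p^*(\sO_E \oplus L)) = \PP(\sO_{\C_z} \oplus p^*L)$. Every holomorphic line bundle on $\C$ is trivial, because $H^1(\C, \sO^*) = 0$ by the exponential sequence and $H^1(\C,\sO)=H^2(\C,\Z)=0$. Hence $p^*L$ is trivial and $\tilde M \simeq \C_z \times \PP^1$.

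To get the action explicitly, I use the description of Definition \ref{defn:automorphy-line-bundle}. By Example \ref{example:automorphy}, $\mathrm{Tot}(L^*) = (\C_z \times \C_t)/\Lambda$ with $\lambda\cdot(z,t) = (z+\lambda, b_\lambda(z) t)$. This trivialises $p^*L^*$ on $\C_z$ with transition $b_\lambda$; dually, $p^*L$ is trivialised with transition $b_\lambda^{-1}$.

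By Example \ref{example:complement} (and Section \ref{subsection:reminder-cech}), $M = \PP(\sO_E \oplus L) = \mathbf P(\sO_E \oplus L^*)$. Its points over $z$ are lines $[x_0 : x_1]$ in $\sO \oplus L^*$, where $x_0$ is a coordinate in the trivial factor and $x_1$ a coordinate in the trivialised $p^*L^*$. Indeed the embedding $\mathrm{Tot}(L^*) \hookrightarrow M$, $(z,t) \mapsto (z,[1:t])$, identifies the complement of $C_-$, and $C_-$ becomes $\{x_0 = 0\}$. The lifted action must be linear on $\sO\oplus L^*$, acting by $1$ on the first summand and by $b_\lambda(z)$ on the second. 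This gives exactly $\lambda\cdot(z,[x_0:x_1]) = (z+\lambda,[x_0 : b_\lambda(z) x_1])$.

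Finally I check two things. First, this is a well-defined action: the cocycle identity $b(\lambda+\mu, z) = b(\lambda, z+\mu)\, b(\mu,z)$ gives compatibility. Second, it agrees with the deck action of $\tilde M \to M$, since it lies over translation by $\lambda$ on $\C_z$ and the quotient is $\mathbf P(\sO_E \oplus L^*)$. The quotient is this projective bundle because the quotient of the vector bundle $\C_z\times\C^2$ by the linear action $\mathrm{diag}(1,b_\lambda)$ is $\sO_E\oplus L^*$, and projectivisation commutes with the quotient.

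The only delicate point is the convention: whether $x_1$ transforms by $b_\lambda$ or $b_\lambda^{-1}$, that is, Grothendieck versus non-Grothendieck projectivisation. I would settle this by comparing with the embedding of $\mathrm{Tot}(L^*)$, where $C_+$ is $\{x_1=0\}$ and $C_-$ is $\{x_0=0\}$.
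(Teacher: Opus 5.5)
Your proposal is correct and follows essentially the same route as the paper: trivialise $p^*L$ on $\C_z$, identify $M$ with $\mathbf P(\sO_E\oplus L^*)$, realise $\sO_E\oplus L^*$ as the quotient of $\C_z\times\C^2$ by the linear action $\mathrm{diag}(1,b_\lambda(z))$ given by the factor of automorphy of $L^*\simeq\sO_E(p)$, and pass to homogeneous coordinates. Your extra checks, namely that the cocycle identity makes this a group action and that the embedding of $\mathrm{Tot}(L^*)$ puts $C_-$ at $\{x_0=0\}$ and $C_+$ at $\{x_1=0\}$, are correct; the paper leaves them implicit.
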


\begin{proof}
We have $\widetilde M=M\times_E\C\simeq\PP(\sO_{\C}\oplus p^*L)$, and since $p^*L$ is trivial, it follows that $\widetilde M\simeq\C_z\times\PP^1$. We now describe the $\Lambda$-action on $\widetilde M$. First of all, we recall that
$$
\PP(\sO_E \oplus L) = \mathbf P(\sO_E \oplus L^*)
$$
where $\mathbf P(\sO_E \oplus L^*)$ is the projectivised bundle of lines in the total space $\sO_E \oplus L^*$.
Since by definition $\sO_E \oplus L^* \simeq \sO_E \oplus \sO_E(\bar p)$ it follows
from Definition \ref{defn:automorphy-line-bundle} that  
$$
\lambda\cdot(z,x_0,x_1)=(z+\lambda,x_0,b_\lambda(z)x_1)
$$
gives an action on $\C_z \times \C_{x_0} \times \C_{x_1}$ such that the quotient is the total space of $\sO_E \oplus L^*$. The action \eqref{action:tildeM} is the same action in homogeneous coordinates.
\end{proof}

For later use, we need to write the action \eqref{action:tildeM} in affine charts:

\begin{notation} \label{notation:tildeM}{\rm
We choose homogeneous coordinates $[\alpha:\beta]$ on the $\PP^1$-factor  
so that $p_M^{-1}(C_-)=\{\alpha=0\}$ and $p_M^{-1}(C_+)=\{\beta=0\}$. 
In particular, we have
\[
\widetilde M_0:=\widetilde M\setminus p_M^{-1}(C_-)\simeq\operatorname{Tot}(p^*L^*)\simeq\C_z\times\C_t
\]
and
\[
\widetilde M_\infty:=\widetilde M\setminus p_M^{-1}(C_+)\simeq\operatorname{Tot}(p^*L)\simeq\C_z\times\C_v,
\]
where $t=\frac{\beta}{\alpha}$ on $\widetilde M_0$ and $v=\frac{\alpha}{\beta}$ on $\widetilde M_\infty$. 
It follows that the $\Lambda$-actions on $\widetilde M_0$ and $\widetilde M_\infty$ are respectively given by
\[
\lambda\cdot(z,t)=(z+\lambda,b_\lambda(z)t), \qquad
\lambda\cdot(z,v)=(z+\lambda,b_\lambda(z)^{-1}v). 
\]\nobreak\hfill$\square$\par
} 
\end{notation}

We now consider the  $\Lambda$-action on $\C_z \times \C^2$  given by
\[
(m\tau+n)\cdot(z,a,b)=(z+m\tau+n,a+Kmb,b),
\]
and let $B_K$ be the vector bundle defined by
\[
\mathrm{Tot}(B_K)=(\C_z\times\C^2)/\Lambda.
\]
The map 
$$
\C_z \times \C^2 \rightarrow \C_z \times \C, \qquad (z, a, b) \ \mapsto \ (z, b)
$$
is $\Lambda$-equivariant, and since the $\Lambda$-action on $\C_z \times \C^2 $ is trivial on $b$, we have a well-defined morphism
$$
\mathrm{Tot}(B_K) \rightarrow \mathrm{Tot}(\sO_E).
$$
Let $\theta\in H^0(E,\Omega_E^1)$ be such that $p^*\theta=dz$, and define the morphism
$$
\iota_K \colon \mathrm{Tot}(\Omega_E) \to \mathrm{Tot}(B_K), \qquad (\bar z, a \theta) \ \mapsto \overline{(z, a, 0)}.
$$
The morphism $\iota_K$ defines an inclusion of locally free sheaves $\Omega_E^1 \xrightarrow{\iota_K} B_K$ (which we still denote by $\iota_K$), and putting everything together, we obtain a short exact sequence of locally free sheaves
\begin{equation}
\label{defineBK}
0\to \Omega_E^1 \xrightarrow{\iota_K} B_K \to \sO_E\to 0.
\end{equation}

\begin{lemma} \label{lemma:cechBK}
In the situation of Setup \ref{setup1}, 
let $\{V_i\}$ be an open cover of $E$ such that each $V_i\simeq \Delta$ and every non-empty intersection $V_i\cap V_j$ is connected. Choose holomorphic sections 
$$
s_i\colon V_i\to\C
$$ of $p$, and on every non-empty intersection $V_i\cap V_j$ write
\[
s_i-s_j=\lambda_{ij}=m_{ij}\tau+n_{ij}.
\]
Then the \v{C}ech $1$-cocycle $\{Km_{ij}\theta\}$ represents the extension class $\beta_\omega$ corresponding to \eqref{defineBK}.
\end{lemma}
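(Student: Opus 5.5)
The plan is to apply Fact \ref{fact-cech} directly to the extension \eqref{defineBK}. On each $V_i$ we lift the section $1 \in \sO_E(V_i)$ to a local section $\sigma_i$ of $B_K$, compute the differences $\sigma_j - \sigma_i$ on overlaps, and identify them as sections of $\iota_K(\Omega^1_E)$.

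\textbf{Step 1: trivialise over each $V_i$.} Since $V_i \simeq \Delta$ is simply connected, the section $s_i$ identifies $V_i$ with $s_i(V_i) \subset \C_z$. Hence a local section of $B_K$ over $V_i$ is the class of a map $\bar z \mapsto (s_i(\bar z), a(\bar z), b(\bar z))$. Define
$$
\sigma_i(\bar z) := \overline{(s_i(\bar z), 0, 1)}.
$$
Because the projection $(z,a,b) \mapsto (z,b)$ induces the quotient $B_K \to \sO_E$, each $\sigma_i$ is a lift of $1$.

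\textbf{Step 2: compare the two lifts on an overlap.} On $V_i \cap V_j$ we have $s_i = s_j + \lambda_{ij}$ with $\lambda_{ij} = m_{ij}\tau + n_{ij}$. This $\lambda_{ij}$ is constant because the intersection is connected and $\Lambda$ is discrete. Applying $\lambda_{ij}$ to the point $(s_j, 0, 1)$ gives
$$
\lambda_{ij}\cdot(s_j,0,1) = (s_i, K m_{ij}, 1).
$$
So in the chart given by $s_i$, the section $\sigma_j$ is represented by $(s_i, K m_{ij}, 1)$. It follows that
$$
\sigma_j - \sigma_i = \overline{(s_i, K m_{ij}, 0)} = \iota_K(K m_{ij}\theta),
$$
using the definition of $\iota_K$ and the fact that $p^*\theta = dz$.

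\textbf{Step 3: conclude.} By Fact \ref{fact-cech}, the cocycle $\{K m_{ij}\theta\}$ represents the extension class of \eqref{defineBK} in $H^1(E, \Omega^1_E)$.

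\textbf{Main obstacle.} The statement says this class is $\beta_\omega$, so the cocycle must also match the class fixed in Setup \ref{setup1}. There, $\beta_\omega$ was defined only through a complex number $K$ via $H^1(E, \Omega_E) \simeq \C$, so the identification must be set up compatibly. I would check that the cocycle $\{m_{ij}\theta\}$ represents a nonzero generator of $H^1(E, \Omega_E)$, and then take this generator as the normalisation of the isomorphism $H^1(E,\Omega_E) \simeq \C$; with that normalisation the cocycle is $\beta_\omega$.

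For the check, pair the class with $1$ via the connecting map. Equivalently, use the Dolbeault representative: $m$ is the coefficient in $z = m\tau + n$, so on $\C$ the function $\mathrm{Im}(z)/\mathrm{Im}(\tau)$ has differences $m_{ij}$ on overlaps. Its $\bar\partial$ gives the Dolbeault form
$$
\frac{d\bar z}{2i\,\mathrm{Im}\,\tau}\wedge \theta,
$$
up to sign, which has nonzero integral. This shows the class is nonzero, hence a generator, and fixes the normalisation. This sign and constant bookkeeping is the step I expect to need the most care, since it must stay consistent with the later use of $K$ in Proposition \ref{prop:key}.
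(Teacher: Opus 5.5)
Your proposal is correct and follows the paper's proof: you use the same lifts $\sigma_i=\overline{(s_i,0,1)}$, compare them on $V_i\cap V_j$ through the action of $\lambda_{ij}$ to get $\sigma_j-\sigma_i=\iota_K(Km_{ij}\theta)$, and conclude with Fact \ref{fact-cech}. Your extra normalisation check does not appear in the paper, which tacitly fixes the isomorphism $H^1(E,\Omega_E)\simeq\C$ so that $\{m_{ij}\theta\}$ corresponds to $1$; your Dolbeault computation shows this class is nonzero, so that choice is legitimate.
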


\begin{proof}
For every $i$, we define a 
$$
\sigma_i\in H^0(V_i,B_K), \qquad \sigma_i(x):= \overline{(s_i(x),0,1)}.
$$ 
Then $\sigma_i$ is a local lift of $1\in H^0(V_i,\sO_E)$, so, by Fact \ref{fact-cech}, the differences $\sigma_j - \sigma_i$ form a \v Cech cocycle representing the extension class of $B_K$. By definition of $B_K$, since on $V_i\cap V_j$ we have
\[
\lambda_{ij}\cdot(s_j(x),-Km_{ij},1)=(s_i(x),0,1),
\]
it holds
\[
\overline{(s_i(x),0,1)}=\overline{(s_j(x),-Km_{ij},1)}.
\]
But then 
$$
\sigma_j(x)-\sigma_i(x)= \iota_K(Km_{ij}\theta)
$$
on $V_i\cap V_j$. 
\end{proof}

\begin{notation} \label{notation:C} {\rm
Let $V_i \subset E$ be the open sets defined in Lemma \ref{lemma:cechBK}. Since the line bundle $L$ trivialises on the disc $V_i \simeq \Delta$ we have $\fibre{f}{V_i} \simeq V_i \times \PP^1$ and we choose homogeneous coordinates 
$[\alpha_i:\beta_i]$ on the $\PP^1$-factor so that 
$$
C_-=\{\alpha_i=0\}, \qquad C_+=\{\beta_i=0\}.
$$ 
Set 
$$
M_{i,0}:=\{\alpha_i\neq0\}=f^{-1}(V_i)\setminus C_-, \qquad M_{i,\infty}:=\{\beta_i\neq0\}=f^{-1}(V_i)\setminus C_+,
$$ 
and observe that they are Stein, since isomorphic to $V_i\times \C$. Note that these notations are consistent with Notation \ref{notation:tildeM}, i.e. for all $i$ we have
$$
M_{i,0} \simeq s_i(V_i) \times \C_t \subset \tilde M_0,
\qquad
M_{i,\infty} \simeq s_i(V_i) \times \C_v \subset \tilde M_\infty.
$$
Since these sets are Stein, we can use this cover to write down a \v{C}ech representative
of the class $\xi_C = [\sO_M(-C_-)] \in H^1(M, \Omega^1_M)$: the curve $C_-$ is disjoint from 
$M_{i, 0}$, so that we can pick the constant polynomial $1$ on $M_{i, 0}$. On $M_{i,\infty}$, we have the affine coordinate $v_i:=\alpha_i/\beta_i$, and here $C_-$ is given by the polynomial $v_i$. This determines the transition function of the line bundle $\sO_M(-C_-) \in H^1(M, \sO_M^*)$ for this cover, and via
the map 
$$
d \log\colon \sO_M^* \rightarrow \Omega^1_M, \qquad f \mapsto \frac{df}{f}
$$
the corresponding cohomology class \cite[III,Ex.7.4]{Har77}. A short computation shows that the \v{C}ech representative of $\xi_C$ is 
\begin{eqnarray*}
0  & \mbox{on }  M_{i,0}\cap M_{j,0},
\\
d(\frac{v_j}{v_i})/\frac{v_j}{v_i} &  \mbox{on } M_{i,\infty}\cap M_{j,\infty}, 
\\
\frac{dv_j}{v_j} & \mbox{on } M_{i,0}\cap M_{j,\infty},
\\
\frac{-dv_i}{v_i} &  \mbox{on } M_{i,\infty}\cap M_{j,0}.
\end{eqnarray*}
}
\end{notation}

\begin{lemma} \label{lemma:cech-omega}
Let 
\begin{equation}
\label{dual:canonical-extension}
0\to\Omega_M^1\to W\to\sO_M\to0
\end{equation}
be the dual sequence of \eqref{eqn-canonicalext}. 
Then we can choose local lifts $\sigma_{i,0} \in H^0(M_{i,0}, W)$ and $\sigma_{i,\infty} \in H^0(M_{i,\infty}, W)$ of $1 \in H^0(M, \sO_M)$ such that, using the covering from Notation \ref{notation:C} and the notation of Lemma \ref{lemma:cechBK},
the holomorphic $1$-forms
\begin{eqnarray}
\label{imp1}
\sigma_{j,0}-\sigma_{i,0}=Km_{ij}f^*\theta & \mbox{on } M_{i,0}\cap M_{j,0}
\\
\label{imp2}
\sigma_{j,\infty}-\sigma_{i,\infty}=\rho\frac{d(v_j/v_i)}{v_j/v_i}+Km_{ij}f^*\theta
& \mbox{on }
M_{i,\infty}\cap M_{j,\infty}
\\
\label{imp3}
\sigma_{j,\infty}-\sigma_{i,0}=\rho\frac{dv_j}{v_j}+Km_{ij}f^*\theta
& \mbox{on }
M_{i,0}\cap M_{j,\infty}
\end{eqnarray}
define a \v Cech cocycle representing the class $[\omega]$.
\end{lemma}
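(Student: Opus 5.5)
The plan is to build the cocycle for $\omega=\rho\xi_C+f^*\beta_\omega$ from the two pieces already computed, and then to use Fact \ref{fact-cech} to realise it by local lifts. The dual sequence \eqref{dual:canonical-extension} is an extension of $\sO_M$ by $\Omega^1_M$. Since \eqref{eqn-canonicalext} has class $\omega\in\mbox{Ext}^1(T_M,\sO_M)\simeq H^1(M,\Omega^1_M)$, its dual has the same class in $\mbox{Ext}^1(\sO_M,\Omega^1_M)\simeq H^1(M,\Omega^1_M)$, possibly up to a universal sign, which we absorb into the convention. By Fact \ref{fact-cech}, any choice of local lifts of $1$ on the Stein open sets $M_{i,0}$ and $M_{i,\infty}$ gives differences forming a \v{C}ech cocycle representing $[\omega]$. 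Changing the lifts by local $1$-forms $q$ changes the cocycle by the coboundary $\{q_j-q_i\}$. So it suffices to exhibit some cocycle $\{c\}$ on this cover that represents $[\omega]$. Given such a cocycle, we fix arbitrary lifts, compare the two cocycles, and correct the lifts by the primitive $1$-cochain of their difference.

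The first step is to produce the explicit cocycle. Notation \ref{notation:C} gives a \v{C}ech representative of $\xi_C$ on the cover $\{M_{i,0},M_{i,\infty}\}$, and we multiply it by $\rho$. For $f^*\beta_\omega$, Lemma \ref{lemma:cechBK} shows that $\{Km_{ij}\theta\}$ represents $\beta_\omega$ on the cover $\{V_i\}$ of $E$. Pulling back via $f$, and using $f(M_{i,0}),f(M_{i,\infty})\subset V_i$, gives the cocycle $Km_{ij}f^*\theta$ on every intersection of the four types. Adding the two cochains gives \eqref{imp1}, \eqref{imp2} and \eqref{imp3}. The fourth type, on $M_{i,\infty}\cap M_{j,0}$, is $-\rho\,dv_i/v_i+Km_{ij}f^*\theta$. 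The diagonal terms $M_{i,0}\cap M_{i,\infty}$ are also covered, with $m_{ii}=0$. The sum of two cocycles is a cocycle, and it represents $\rho\xi_C+f^*\beta_\omega=\omega$ by \eqref{decompose-omega}.

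The second step is the correction. We take arbitrary lifts $\sigma'$, which exist because each open set is Stein and the sequence splits locally. Their differences give a cocycle $c'$ that is cohomologous to $c$, so $c'-c=\{q_j-q_i\}$ for a $0$-cochain $q$ of $1$-forms on the open sets of the cover. Setting $\sigma:=\sigma'-q$ gives lifts whose differences are exactly $c$.

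The main obstacle is bookkeeping rather than any real difficulty. Two points need care. First, the cohomology computed on this particular cover must agree with $H^1(M,\Omega^1_M)$. This holds by Leray's theorem, since the open sets and their intersections are Stein, being products of discs or annuli with $\C$ or $\C^*$. Second, the sign conventions in the identification of the extension class of the dual sequence with $\omega$, and in $\xi_C=[\sO_M(-C_-)]$, must be checked consistently. The choice of $-\xi_C$ as the class of $C_-$ in Setup \ref{setup1} is designed exactly so that these signs work out.
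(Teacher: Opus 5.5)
Your proposal is correct and follows essentially the same route as the paper: take arbitrary lifts on the Stein sets $M_{i,0}, M_{i,\infty}$, note via Fact \ref{fact-cech} that their differences represent $[\omega]$, assemble the explicit cocycle from $\rho$ times the representative of $\xi_C$ in Notation \ref{notation:C} plus the pull-back of $\{Km_{ij}\theta\}$ from Lemma \ref{lemma:cechBK}, and correct the lifts by the resulting coboundary. Your extra bookkeeping (the fourth type of intersection, the sign of the dual extension class) only makes explicit what the paper leaves implicit; for the step ``cohomologous cocycles differ by a coboundary'' you do not need Leray, since $\check H^1(\mathcal U,\Omega^1_M)\to H^1(M,\Omega^1_M)$ is injective for every open cover.
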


\begin{proof}
Since the sets $M_{i,0}$ and $M_{i,\infty}$ are Stein, it is clear that local lifts $\hat \sigma_{i,0}, \hat \sigma_{i, \infty}$ of $1 \in H^0(M, \sO_M)$ exist on these sets. By Fact \ref{fact-cech}, the difference of these lifts defines a \v Cech cocycle representing the class $[\omega]=\rho \xi_C+f^*\beta_\omega$. By Notation \ref{notation:C} and Lemma \ref{lemma:cechBK}, the local forms in the statement define a \v Cech cocycle representing $[\omega]$. Since two cocycles representing the same class differ by a coboundary, we can use this coboundary to replace the local lifts by $\sigma_{i,0}, \sigma_{i, \infty}$ so that the formula holds.
\end{proof}

\begin{lemma} \label{lemma:coordinates}
Set
$$
\widetilde Z_0 := \fibre{\tilde \pi}{\widetilde M_0},\;
\widetilde Z_\infty := \fibre{\tilde \pi}{\widetilde M_\infty},
$$ 
where $\widetilde M_0 \simeq \C_z \times \C_t$ and $\widetilde M_\infty \simeq \C_z \times \C_v$
are defined in Notation \ref{notation:tildeM}. Then we have isomorphisms
$$
\widetilde Z_0\simeq\C_z\times\C_t\times\C_u\times\C_y,\; 
\widetilde Z_\infty\simeq\C_z\times\C_v\times\C_u\times\C_{y_\infty},
$$ 
and on $\widetilde Z_0\cap\widetilde Z_\infty = \{ p \in \C_z\times\C_t\times\C_u\times\C_y \ | \  t \neq 0 \}$ we have 
$$
v=\frac{1}{t},\;
y_\infty=-t^2y-\rho t.
$$ 
Moreover the $\Lambda$-action on $\widetilde Z_0\simeq\C_z\times\C_t\times\C_u\times\C_y$ is given
for $\lambda=m \tau +n$ by
$$
\lambda \cdot (z,t,u,y) =
\Bigg(z+\lambda, b_\lambda(z) t, u+m(K+2\pi i ty), \frac{y}{b_\lambda(z)}\Bigg).
$$
\end{lemma}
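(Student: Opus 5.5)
Since $Z\to M$ is an affine bundle modelled on $\Omega^1_M$, it is the affine bundle of lifts of $1$ in the dual sequence \eqref{dual:canonical-extension}. Concretely, $Z=\PP(V^\omega)\setminus\PP(T_M)$ is the set of points $w$ of $\mathrm{Tot}(W)$ mapping to $1\in\sO_M$. Over each Stein set $M_{i,0}$ (resp.\ $M_{i,\infty}$) the lifts $\sigma_{i,0}$ (resp.\ $\sigma_{i,\infty}$) of Lemma \ref{lemma:cech-omega} trivialise it: a point over $x$ is $\sigma_{i,0}(x)+\eta$ with $\eta\in\Omega^1_{M,x}$. The plan is to pull this back to $\widetilde M_0$ and $\widetilde M_\infty$. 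Since the action permutes the sheets $s_i(V_i)$, we will glue the local descriptions into global ones on the universal cover.

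\textbf{Chart on $\widetilde Z_0$.} On $\widetilde M_0\simeq\C_z\times\C_t$ we write $\eta=u\,dz+y\,dt$. We identify a point of $\widetilde Z_0$ over $s_i(V_i)\times\C_t$ with $(z,t,u,y)$, where the point equals $\sigma_{i,0}+u\,dz+y\,dt$. Two such identifications over overlapping sheets differ by \eqref{imp1}, that is by the constant form $Km_{ij}\,dz$. On the universal cover the same geometric point is therefore described in two sheets related by the deck transformation $\lambda_{ij}$. This is exactly where the $u$-shift by $Km$ appears. One has to check that the coordinate $t$ on the sheet $s_i$ corresponds to $t_i$ via the factor $b_\lambda$. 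So we define the chart on $\widetilde Z_0$ using one reference lift per sheet, and read off the action. The point $\sigma_{j,0}+u\,dz+y\,dt$ is moved by $\lambda=(m,n)$ to a point over $(z+\lambda,b_\lambda(z)t)$, and there we must re-express the form $u\,dz+y\,dt$ in the new coordinates. Since $t'=b_\lambda(z)t$, we have $dt'=b_\lambda\,dt+b_\lambda' t\,dz$. Using $b_\lambda'/b_\lambda=-2\pi i m$ from Example \ref{example:automorphy}, this gives $y\,dt=\frac{y}{b_\lambda}\,dt'+2\pi i m\,ty\,dz$. So $y\mapsto y/b_\lambda$, and $u$ picks up $2\pi i m ty$ plus the constant $Km$ from \eqref{imp1}. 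This gives the formula in the statement.

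\textbf{Chart on $\widetilde Z_\infty$ and gluing.} On $\widetilde M_\infty$ we write $\eta=u\,dz+y_\infty\,dv$ relative to $\sigma_{i,\infty}$. On the overlap we compare with $\sigma_{i,0}$ using \eqref{imp3} for $i=j$, which gives $\sigma_{i,\infty}-\sigma_{i,0}=\rho\,dv/v$. We have $dt=-dv/v^2=-t^2dv$, so $y\,dt=-t^2y\,dv$. The point $\sigma_{i,0}+u\,dz+y\,dt$ then equals $\sigma_{i,\infty}+u\,dz+(-t^2y-\rho t)\,dv$, because $\rho\,dv/v=\rho t\,dv$ must be subtracted. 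This yields $v=1/t$ and $y_\infty=-t^2y-\rho t$, with $u$ unchanged, since $z$ is a coordinate common to both charts.

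\textbf{Main obstacle.} The delicate point is the bookkeeping that makes the charts global on $\widetilde Z_0$ and consistent between sheets. The local lifts $\sigma_{i,0}$ are indexed by the cover of $E$, while the coordinates live on $\C_z$. We must verify that \eqref{imp1} with $m_{ij}$ given by $s_i-s_j$ exactly matches the deck transformation relating sheets, including the correct sign of $\lambda$. If a sign convention fails, we will replace $K$ by $-K$ or adjust the choice of lifts by a coboundary, as allowed in Lemma \ref{lemma:cech-omega}. The condition \eqref{imp2} on $\widetilde M_\infty$ is then automatic from the two others by the cocycle property.
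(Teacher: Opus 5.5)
Your local computations are correct. Rewriting $y\,dt=\frac{y}{b_\lambda}\,dt'+2\pi i m\,ty\,dz$ gives $y\mapsto y/b_\lambda$ and the term $2\pi i m\,ty$. On the base sheet, $\sigma_{i,\infty}-\sigma_{i,0}=\rho\,dv/v$ gives $y_\infty=-t^2y-\rho t$. The gap is the step you call the main obstacle, and it is really the content of the lemma. You do not say which lift you use on each translate $s_i(V_i)+\lambda$ of a sheet. That choice is exactly what makes the per-sheet charts glue into one chart on $\widetilde Z_0$, and it is what produces the constant shift. The pulled-back lifts $p_M^*\sigma_{i,0}$ are deck-invariant, so if you used them on every translate, the action would shift $u$ only by $2\pi i m\,ty$. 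But those lifts do not glue: on overlapping translates they differ by $Km_{ij}\,dz$. So \eqref{imp1} alone, which only compares indices $i$ and $j$ on $M$, does not give you a shift $Km$ for an arbitrary $\lambda$. Your remark about the deck transformation $\lambda_{ij}$ between base sheets is a consistency heuristic, not a construction. Also, the fallback of replacing $K$ by $-K$ is not available: $K$ is fixed by Lemma \ref{lemma:cechBK} and \eqref{imp1}, so the sign has to come out of the argument.

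The missing idea is to split the pulled-back cocycle explicitly on $\C_z$, which is possible because $p^*\beta_\omega=0$. On $p^{-1}(V_i)$ write $z=s_i(p(z))+m_i(z)\tau+n_i(z)$ with $m_i,n_i$ locally constant, and set $q_i:=Km_i(z)\,dz$. Then $q_j-q_i=Km_{ij}\,dz$, so $\sigma_0:=p_M^*\sigma_{i,0}-\tilde f^*q_i$ is a global lift of $1$ on $\widetilde M_0$, and this gives $\widetilde Z_0\simeq\C_z\times\C_t\times\C_u\times\C_y$. Since $m_i(z+\lambda)=m_i(z)+m$ while $p_M^*\sigma_{i,0}$ is invariant, one gets $\lambda\cdot\sigma_0(x)=\sigma_0(x')+Km\,dz$; this is the source of $+Km$, with its sign.

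The same issue appears on $\widetilde M_\infty$. On a translated sheet, $p_M^*v_i=a_iv$ with $a_i$ a non-constant unit; on the translate by $m\tau+n$ one finds $da_i/a_i=-2\pi i m\,dz$. So your identity $\sigma_{i,\infty}-\sigma_{i,0}=\rho\,dv/v$ fails off the base sheet, and with uncorrected lifts even $u_\infty=u$ would break. You need the global lift $\sigma_\infty:=p_M^*\sigma_{i,\infty}-\tilde f^*q_i-\rho\,da_i/a_i$, for which $\sigma_\infty-\sigma_0=\rho\,dv/v$ holds everywhere.
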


\begin{remark} \label{remark-total-space}
Let $\holom{p_M}{\widetilde M}{M}$ be the universal cover and
\[
0 \to p_M^* \Omega_M^1\to p_M^* W \xrightarrow{\epsilon} p_M^* \sO_M \to0
\]
the pull-back of the dual sequence of \eqref{eqn-canonicalext}. Then we have an isomorphism
$$
\tilde Z \simeq \mathbf P(p_M^* W) \setminus \mathbf P(p_M^* \Omega^1_M).
$$ 
Let $B \subset \tilde M$ be an open set such that there exists a local lift $\sigma \in \Gamma(B,p_M^*W)$ of $1 \in \Gamma(B, p_M^* \sO_M)$. Then every point of 
$\fibre{\tilde \pi}{B} \subset \tilde Z$ can be uniquely written as $\sigma+\eta$, with $\eta\in \Gamma(B, p_M^*\Omega_M^1)$. Thus we obtain a biholomorphism
$$
\fibre{\tilde \pi}{B} \simeq \mbox{Tot}(p_M^* \Omega^1_M|_B)
$$
\end{remark}

\begin{proof}[Proof of Lemma \ref{lemma:coordinates}]
By definition $p^*\theta=dz$, so pulling back \eqref{imp1} via $p_M\colon\widetilde M\to M$ 
we obtain
\[
p_M^*\sigma_{j,0}-p_M^*\sigma_{i,0}=Km_{ij}\, \tilde f^*dz
\]
on $p_M^{-1}(M_{i,0}\cap M_{j,0})$. Since $p^*\beta_\omega=0$ in $H^1(\C,\Omega_\C^1)$, the cocycle $\{Km_{ij}\,dz\}$ is a coboundary, thus there exist holomorphic $1$-forms $q_i$ on $p^{-1}(V_i)$ such that
\[
\tilde f^* q_j- \tilde f^* q_i=Km_{ij}\, \tilde f^* dz.
\]
 We now choose the $\{q_i\}$ so that the $\Lambda$-action on $\sigma_0$ can be explicitly computed. For $z\in p^{-1}(V_i)$ write $z=s_i(p(z))+\lambda_i(z)$, where $\lambda_i(z)=m_i(z)\tau+n_i(z)\in\Lambda$ and the functions $m_i$ and $n_i$ are locally constant. Following the notation of Lemma \ref{lemma:cechBK}, we have
$$
s_i-s_j=\lambda_{ij}=m_{ij}\tau+n_{ij},
$$
on $V_i\cap V_j$, and on $p^{-1}(V_i\cap V_j)$ it holds $\lambda_j-\lambda_i=\lambda_{ij}$, and therefore $m_j-m_i=m_{ij}$. Then if we set $q_i:=Km_i(z)\,dz$, we have $q_j-q_i=Km_{ij}dz$, and hence
\[
\tilde f^*q_j-\tilde f^*q_i=Km_{ij}\,\tilde f^*dz.
\]
Moreover, for $\lambda=m\tau+n\in\Lambda$, we have
\[
m_i(z+\lambda)=m_i(z)+m,
\]
and so
\[
q_i(z+\lambda)=q_i(z)+Km\,dz.
\] 
Then the local sections $p_M^*\sigma_{i,0}- \tilde f^*q_i$ glue to a section $\sigma_0$ of $p_M^*W$ on $\widetilde M_0$, which is a lift of $1$ (cf. Lemma \ref{lemma:cech-omega}).
By Remark \ref{remark-total-space} we obtain $\tilde Z_0 \simeq \mbox{Tot}(p_M^* \Omega^1_M|_{\tilde M_0})$. Since $\widetilde M_0 \simeq \C_z \times \C_t$ every point of $\widetilde Z_0$ can be written uniquely as
\begin{equation}
\label{coordinatetildeZ0}
\sigma_0+u\,dz+y\,dt,
\end{equation}
this gives the first isomorphism.
\medskip

 Similarly, pulling back \eqref{imp2} via $p_M\colon\widetilde M\to M$  we obtain
\[
p_M^*\sigma_{j,\infty}-p_M^*\sigma_{i,\infty}=\rho\frac{d(p_M^*(v_j/v_i))}{p_M^*(v_j/v_i)}+Km_{ij}\,dz.
\]
Recall that $\widetilde M_\infty \simeq \C_z \times \C_v$ and  $v$ vanishes with order $1$ along $p_M^{-1}(C_-)$. But also the section $p_M^*v_i$ vanishes with order $1$ along $p_M^{-1}(C_-)$, and hence we can write $p_M^*v_i=a_iv$, where $a_i$ is a nowhere-vanishing holomorphic function on $p_M^{-1}(M_{i,\infty})$. Then $p_M^*(v_j/v_i)=a_j/a_i$, and therefore the sections
\[
p_M^*\sigma_{i,\infty}-\tilde f^*q_i-\rho\frac{da_i}{a_i}
\]
glue to a section $\sigma_\infty$ of $p_M^*W$ on $\widetilde M_\infty$ mapping to $1$. 
Since $\widetilde M_\infty \simeq \C_z \times \C_v$ every point of $\widetilde Z_\infty$ can be uniquely written as
\[
\sigma_\infty+u_\infty\,dz+y_\infty\,dv;
\]
this gives the second isomorphism (we will justify the identification $u_\infty=u$ in the next step).
\medskip

On $p_M^{-1}(M_{i,0}\cap M_{i,\infty})$, the pull-back of \eqref{imp3}, with $j=i$, gives
\[
p_M^*\sigma_{i,\infty}-p_M^*\sigma_{i,0}=\rho\frac{d(p_M^*v_i)}{p_M^*v_i}.
\]
Since $p_M^*v_i=a_iv$, it follows from the definitions of $\sigma_0$ and $\sigma_\infty$ that
\[
\sigma_\infty-\sigma_0=\rho\frac{dv}{v}=-\rho\frac{dt}{t}.
\]
Since $v=1/t$ and $dv=-dt/t^2$, on the overlap $\widetilde M_0\cap \widetilde M_{\infty}$ we have
\begin{equation}\label{eqn:coefficients}
\sigma_\infty+u_\infty\,dz+y_\infty\,dv=\sigma_0-\rho\frac{dt}{t}+u_\infty\,dz-\frac{y_\infty}{t^2}\,dt.
\end{equation}
If we compare the coefficients of $dz$ and $dt$ above, with $u$ and $y$ in \eqref{coordinatetildeZ0}, we obtain
\[
u_\infty=u,\; y_\infty=-t^2y-\rho t.
\]
Thus the coordinate $u$ is the same on both charts, and the statement on the coordinate change follows. 
\medskip

For the last statement recall first that by \eqref{action:tildeM} the action on $\tilde M \simeq \C_z \times \PP^1$ is given by 
$$
\lambda\cdot(z,[x_0:x_1])=(z+\lambda,[x_0:b_\lambda(z)x_1]).
$$
Since $t=\frac{x_1}{x_0}$, the action on the first two coordinates is clear. Let $x=(z,t)$ and set
$$
x'=\lambda\cdot x=(z+\lambda,b_\lambda(z)t).
$$
By the explicit choice of the forms $q_i$ above, we have
$$
q_i(z+\lambda)=q_i(z)+Km\,dz.
$$
We now determine the $\Lambda$-action on the section $\sigma_0$. First of all, note that the induced action on the pull-back $p_M^*W$ is given by $\lambda\cdot(x,w)=(x',w)$. In particular,
\[
p_M^*\sigma_{i,0}(x)=\bigl(x,\sigma_{i,0}(p_M(x))\bigr),
\]
and since $p_M(x)=p_M(x')$, we have
\begin{align*}
\lambda\cdot p_M^*\sigma_{i,0}(x)&=\bigl(x',\sigma_{i,0}(p_M(x))\bigr)\\
                                 &=\bigl(x',\sigma_{i,0}(p_M(x'))\bigr)\\
                                 &=p_M^*\sigma_{i,0}(x').
\end{align*}

We now consider $\tilde f^*q_i$. Since $d(z)$ is invariant under translations, we have $$\lambda \cdot (\tilde f^*q_i(x))=(\lambda \cdot \tilde f^*q_i)(x')=Km_i(z)\,\tilde f^*dz|_{x'}.$$ On the other hand, if we evaluate $\tilde f^*q_i$ at $x'=x+\lambda$, we get
\[
\tilde f^*q_i(x')=Km_i(z+\lambda)\,\tilde f^*dz|_{x'},
\]
and since $\lambda=m\tau+n$ and $m_i(z+\lambda)=m_i(z)+m$, we have
\[
\tilde f^*q_i(x')=K(m_i(z)+m)\,\tilde f^*dz|_{x'},
\]
whence
\[
\lambda\cdot(\tilde f^*q_i(x))=\tilde f^*q_i(x')-Km\,\tilde f^*dz|_{x'}.
\]
Now recall that locally we have $\sigma_0=p_M^*\sigma_{i,0}-\tilde f^*q_i$. Then, using the preceding formulas, we get
\begin{align*}
\lambda\cdot(\sigma_0(x))&=\lambda\cdot (p_M^*\sigma_{i,0}(x))-\lambda\cdot(\tilde f^*q_i(x))\\
                         &=p_M^*\sigma_{i,0}(x')-\tilde f^*q_i(x')+Km\,\tilde f^*dz|_{x'}\\
                         &=\sigma_0(x')+Km\,\tilde f^*dz|_{x'}.
\end{align*}
Now let $P=\sigma_0(x)+u\,dz+y\,dt$ be a point of $\widetilde Z_0$, and write
\[
\lambda\cdot P=\sigma_0(x')+u'\,dz+y'\,dt.
\]
Since $\lambda\cdot(\sigma_0(x))=\sigma_0(x')+Km\,\tilde f^*dz|_{x'}$, if we act with the element $\lambda^{-1}$, we obtain
\[
\lambda^{-1}\cdot(\sigma_0(x'))=\sigma_0(x)-Km\,\tilde f^*dz|_x,
\]
and $\lambda^{-1}\cdot(dz|_{x'})=dz|_x$. But $\lambda\cdot (z,t)=(z+\lambda,b_\lambda(z)t)$, and thus
\[
\lambda^{-1}\cdot(dt|_{x'})=((\lambda^{-1})^{-1})^*(dt|_{x'})=d\bigl(b_\lambda(z)t\bigr)\big|_x=b_\lambda'(z)t\,dz|_x+b_\lambda(z)\,dt|_x.
\]
Therefore, if we act with the element $\lambda^{-1}$ on $\lambda\cdot P$, we get
\begin{align}\label{eqn:P}
P &=\sigma_0(x)-Km\,dz+u'\,dz +y'\bigl(b_\lambda'(z)t\,dz+b_\lambda(z)\,dt\bigr) \nonumber\\
  &=\sigma_0(x)+\bigl(u'-Km+y'b_\lambda'(z)t\bigr)\,dz+y'b_\lambda(z)\,dt.
\end{align}
Now, if we compare \eqref{eqn:P} and the original expression of $P$, we finally arrive at $y'=\frac{y}{b_\lambda(z)}$, and
\begin{equation}\label{eqn:y'}
u'-Km+y'b_\lambda'(z)t=u.
\end{equation}
If we substitute the expression of $y'$ in \ref{eqn:y'} we obtain
\[
u'=u+Km-\frac{b_\lambda'(z)}{b_\lambda(z)}ty,
\]
and since $\frac{b_\lambda'(z)}{b_\lambda(z)}=-2\pi i m$ (cf. Example \ref{example:automorphy}), equation \eqref{eqn:y'} simplifies to $u'=u+m(K+2\pi i ty)$, whence the $\Lambda$-action is
\[
\lambda\cdot(z,t,u,y) = \left(z+\lambda,b_\lambda(z)t,u+m(K+2\pi i ty),\frac{y}{b_\lambda(z)}\right).
\]
\end{proof}

\begin{proof}[Proof of Proposition \ref{prop:key}]
By Lemma \ref{lemma:coordinates} and Lemma \ref{lemma:coordinates-quadric} the maps
\begin{align*}
F_0\colon \widetilde Z_0 \simeq\C_z\times\C_t\times\C_u\times\C_y &\to \C_z\times\C_u\times Q_0 \\
 (z,t,u,y)&\mapsto\bigl(z,u,\psi_0(t,y)\bigr),
\end{align*}
and
\begin{align*}
F_\infty\colon\widetilde Z_\infty \simeq \C_z\times\C_v\times\C_u\times\C_{y_\infty} &\to \C_z\times\C_u\times Q_\infty\\
 (z,v,u,y_\infty)&\mapsto\bigl(z,u,\psi_\infty(v,y_\infty)\bigr)
\end{align*}
are isomorphisms. By Lemma \ref{lemma:coordinates} the coordinate change 
on $\widetilde Z_0\cap\widetilde Z_\infty$ is $v=\frac{1}{t}$ and $y_\infty=-t^2y-\rho t$, 
so using the formulas for $\psi_0$ and $\psi_\infty$ one checks that 
the maps $F_0$ and $F_\infty$ glue to a global biholomorphism
\[
F\colon\widetilde Z \rightarrow \C_z\times\C_u\times Q.
\]
For the description of the $\Lambda$-action, note first that the action is determined by
the action on the Zariski dense open subset $\tilde Z_0 \subset \tilde Z \simeq
\C_z\times\C_u\times Q$.
The action on $\tilde Z_0$ is given by the last statement in Lemma \ref{lemma:coordinates} and we are left to write down the $\Lambda$-action on $\C_z\times\C_u \times Q$ that makes
$F_0$ equivariant.
We have
\begin{align*}
F_0(\lambda \cdot (z,t,u,y)) &= F_0\Bigg(z+\lambda,b_\lambda(z)t,u+m(K+2\pi i ty),\frac{y}{b_\lambda(z)}\Bigg) \\
&= \Bigg(z+\lambda,u+m(K+2\pi i ty), \psi_0\Bigg(b_\lambda(z)t,\frac{y}{b_\lambda(z)}\Bigg)\Bigg).
\end{align*}
Using the formula for $\psi_0$ in Lemma \ref{lemma:coordinates-quadric} one computes
$$
\psi_0\Bigg(b_\lambda(z) t, \frac{y}{b_\lambda(z)}\Bigg)
= \big([1:b_\lambda(z) t], [y: b_\lambda(z) (t y+\rho)]\big).
$$
and
$$
F_0(z,t,u,y) =  \bigl(z,u,\psi_0(t,y)\bigr) = (z,u, [1:t], [y:ty+\rho])
$$
For an equivariant action we have $\lambda \cdot F_0(z,t,u,y)  = F_0(\lambda \cdot (z,t,u,y))$,
so the action on the first two coordinates of $\C_z\times\C_u \times Q$ is clear.
Comparing the two equations above, we see that in homogeneous coordinates the $\Lambda$-action on $Q$ is  given by
$$
\lambda \cdot ([x_0:x_1],[w_0:w_1]) = ([x_0:b_\lambda(z) x_1],[w_0: b_\lambda(z) w_1]).
$$
\end{proof}

\section{Proof of the main result}

With the Notation \ref{notationQ}, consider the holomorphic function
\[
\mu_{Q}\colon Q \rightarrow \C, \qquad ([x_0:x_1],[w_0:w_1]) \mapsto \frac{x_0w_1+x_1w_0}{x_0w_1-x_1w_0}.
\]
Then the function
$$
\tilde \mu:= \mu_Q \circ p_Q \colon \tilde Z \simeq \C_z\times\C_u\times Q \rightarrow \C,
$$
where $p_Q$ is the projection $\C_z\times\C_u\times Q \to Q$, is invariant under the $\Lambda$-action defined in Proposition \ref{prop:key}. Indeed $\tilde \mu$ depends on $([x_0:x_1],[w_0:w_1])$ only, and the action on these coordinates is
$$
\lambda\cdot ([x_0:x_1],[w_0:w_1]) = ([x_0:b_\lambda(z) x_1],[w_0: b_\lambda(z) w_1]).
$$
Thus $\tilde \mu$ descends to a holomorphic function 
$$\holom{\mu}{Z}{\C}.$$  
One easily checks that $\mu_Q$ is surjective, and hence so is $\mu$.

\begin{theorem} \label{thm:degree-one}
In the Setup \ref{setup1}, consider the holomorphic function
$
\holom{\mu}{Z}{\C}
$
defined above.
Set $c_0:=1-\frac{K}{\pi i \rho} \in \C$. Then the fibre $\fibre{\mu}{c_0} \subset Z$ is not Stein,  in particular $Z$ is not Stein.
\end{theorem}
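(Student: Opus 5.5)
The plan is to compute the fibre $\fibre{\mu}{c_0}$ explicitly through the universal cover of Proposition \ref{prop:key}. Show that it is a product $\C\times X$, where $X=(\mathrm{Tot}(L^*)\setminus C_+)$ is the non-Stein surface of Example \ref{example:complement}. Then conclude.

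\emph{Step 1: lift the fibre.} Since $\mu$ is induced by the $\Lambda$-invariant function $\tilde\mu=\mu_Q\circ p_Q$, we have
$$
\fibre{\mu}{c_0} \simeq \bigl(\C_z\times\C_u\times Q_{c_0}\bigr)/\Lambda, \qquad Q_{c}:=\fibre{\mu_Q}{c}\subset Q .
$$
The key observation is the identity
$$
\frac{x_1w_0}{x_0w_1-x_1w_0}=\frac{\mu_Q-1}{2}.
$$
Hence on $\C_z\times\C_u\times Q_c$ the translation in the $u$-coordinate from Proposition \ref{prop:key} is
$$
u\mapsto u+m\bigl(K+\pi i\rho(c-1)\bigr).
$$
For $c=c_0=1-\frac{K}{\pi i\rho}$ this translation vanishes identically. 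So $\Lambda$ acts trivially on $u$, and
$$
\fibre{\mu}{c_0}\simeq \C_u\times (\C_z\times Q_{c_0})/\Lambda .
$$

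\emph{Step 2: identify $Q_{c_0}$ and the quotient.} The equation $\mu_Q=c$ reads $(1-c)x_0w_1=-(1+c)x_1w_0$. First note $c_0\neq 1$, since $K\neq 0$ by Setup \ref{setup1}. Also $c_0\neq -1$: this would force $K=2\pi i\rho$, which I expect to be excluded by the normalisation of $K$ and $\rho$ coming from the Kähler condition (a real class paired against a real $\rho<0$). I would verify this point carefully; if needed I would treat the degenerate reducible conic separately. For $c\neq\pm1$, $Q_c$ is the graph $[w_0:w_1]=[(1-c)x_0:-(1+c)x_1]$ of a map $\PP^1\to\PP^1$. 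On this graph $d_Q=-2x_0x_1$, so $Q_c\simeq\PP^1\setminus\{0,\infty\}=\C^*_t$ with $t=x_1/x_0$. The $\Lambda$-action on this factor is $(z,t)\mapsto(z+\lambda,b_\lambda(z)t)$, which is compatible with the graph since both coordinates scale by $b_\lambda(z)$. By Notation \ref{notation:tildeM} and Example \ref{example:automorphy}, the quotient $(\C_z\times\C^*_t)/\Lambda$ is $\mathrm{Tot}(L^*)\setminus C_+$ with $\deg L^*=1$.

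\emph{Step 3: conclude.} By Example \ref{example:complement}, $X:=\mathrm{Tot}(L^*)\setminus C_+$ is not Stein. The fibre $\C_u\times X$ contains $\{0\}\times X\simeq X$ as a closed submanifold. Closed submanifolds of Stein manifolds are Stein, so $\fibre{\mu}{c_0}$ is not Stein. It is in turn a closed submanifold of $Z$, so $Z$ is not Stein either.

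The main obstacle is Step 2. One must check that the identification of $Q_{c_0}$ with $\C^*$ is $\Lambda$-equivariantly the punctured total space of $L^*$ (and not, say, of $L$), and rule out or handle the degenerate value $c_0=-1$.
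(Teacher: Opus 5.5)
Your proposal is correct and follows essentially the paper's route: the paper also lifts to $\C_z\times\C_u\times Q$, writes $\fibre{\mu_Q}{c_0}$ in the chart $\psi_0$ as $ty=-\frac{K}{2\pi i}$ (your identity $\frac{x_1w_0}{d_Q}=\frac{\mu_Q-1}{2}$ read in that chart), observes that the $u$-translation vanishes there, and identifies the fibre with $\C_u\times(\mathrm{Tot}(L^*)\setminus C_+)$, which is not Stein by Example~\ref{example:complement}. The condition $c_0\neq-1$ that you leave open is only implicit in the paper (it just asserts the conic is smooth) and is harmless: if $K=2\pi i\rho$, then $\fibre{\mu_Q}{-1}=\{x_0w_1=0\}\cap Q$ has the $\Lambda$-fixed node $([0:1],[1:0])$ and the $u$-translation still vanishes, so the fibre contains a compact curve $\simeq E$ and is not Stein anyway (in fact this case never occurs, since a canonical extension attached to a K\"ahler class contains no compact curves), while your $L$ versus $L^*$ worry is moot because $\mathrm{Tot}(L)\setminus 0\simeq\mathrm{Tot}(L^*)\setminus 0$ by fibrewise inversion.
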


\begin{proof}
Using the definition of $\psi_0$ and $\psi_\infty$ in Lemma \ref{lemma:coordinates-quadric}
it is straightforward to verify that 
$$
\mu_{Q}\bigl(\psi_0(t,y)\bigr)=1+\frac{2ty}{\rho},
\qquad
\mu_Q(\psi_\infty(v,y_\infty))=-1-\frac{2vy_\infty}{\rho}.
$$
Since $\tilde \mu=\mu_Q \circ p_Q$ it is clear that
$$
\fibre{\tilde \mu}{c_0} \simeq \C_z \times \C_u \times \fibre{\mu_Q}{c_0}.
$$
The equation $x_0w_1+x_1w_0=\big(1-\frac{K}{\pi i \rho}\big) (x_0w_1-x_1w_0)$
defines a smooth conic $C \subset \PP^1 \times \PP^1$
that intersects the conic $x_0w_1-x_1w_0=0$ transversally.
Thus 
$$
\fibre{\mu_Q}{c_0} \simeq C \setminus V(x_0w_1-x_1w_0) \simeq \C^*
$$ 
and in the chart given by $\psi_0$, its equation is 
$$
ty = -\frac{K}{2\pi i} \quad \Leftrightarrow \quad y =- \frac{K}{2 \pi i t}. 
$$
In particular $\fibre{\mu_Q}{c_0}$ is contained in the chart given by $\psi_0$, so we are left to describe the quotient of
$$
\C_z \times \C_u \times \C_t^* \simeq
\C_z \times \C_u \times \Bigg\{ ty = - \frac{K}{2\pi i} \Bigg\} \subset \C_z\times\C_u\times\C_t\times\C_y
$$
under the $\Lambda$-action given in Lemma \ref{lemma:coordinates}\footnote{Note that the order of the coordinates differs from the order in Lemma \ref{lemma:coordinates}.}. Plugging in $ty = - \frac{K}{2\pi i}$ we obtain that the action simplifies
to
$$
\lambda\cdot (z,u,t,y)=\Bigg(z+\lambda, u, b_\lambda(z) t, \frac{-K}{2 \pi i b_\lambda(z) t}\Bigg).
$$
The action on the coordinate $u$ is trivial, so 
$$
\fibre{\mu}{c_0} \simeq \C_u \times (\C_z \times \C_t^*)/\Lambda,
$$
where the $\Lambda$-action on $(\C_z \times \C_t^*)$ is given by $\lambda\cdot (z,t)=(z+\lambda, b_\lambda(z) t)$. Note that the action extends to $\C_z \times \C_t$ and 
by Example \ref{example:automorphy} we have
$$
(\C_z \times \C_t)/\Lambda \simeq \mbox{\rm Tot}(L^*).
$$
Thus we have
$$
(\C_z \times \C_t^*)/\Lambda \simeq \mbox{\rm Tot}(L^*) \setminus C_+
$$
where $C_+$ is the zero section in the total space. Thus $(\C_z \times \C_t^*)/\Lambda$ is not Stein by
Example \ref{example:complement}.
\end{proof}

We need an elementary lemma which we prove for the convenience of the reader:

\begin{lemma} \label{lemma:elliptic-easy}
Let $E=\C/\Lambda$ be an elliptic curve, and let $L$ be a line bundle of degree $-d<0$ on $E$.
Let $\omega$ be K\"ahler class on $M = \PP(\sO_E \oplus L)$. Then there exists an étale quotient
$$
\holom{g}{E}{E'}
$$
and a line bundle $L'$ of degree $-1$ such that $L \simeq g^* L'$. Let 
$$
\holom{g_M}{M \simeq \PP(g^*(\sO_{E'} \oplus L'))}{M' := \PP(\sO_{E'} \oplus L')}
$$
be the induced \'etale quotient, then there exists a K\"ahler class $\omega'$ on $M'$ such that $\omega \simeq g^* \omega'$.
In particular we have an induced \'etale cover
$$
\holom{g_Z}{Z_M^\omega}{Z_{M'}^{\omega'}}.
$$
\end{lemma}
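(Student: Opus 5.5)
We will construct $g$ as an isogeny of degree $d$ and descend $L$ using surjectivity of pull-back on $\pic^0$. Then we descend the K\"ahler class by averaging over the Galois group of $g_M$, and finally we pull back the canonical extension.

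\emph{Step 1: the isogeny and $L'$.} Let $\Gamma \subset E$ be a cyclic subgroup of order $d$, for instance the one generated by $\overline{1/d}$. Set $E' := E/\Gamma$, so the quotient $g\colon E \to E'$ is an \'etale isogeny of degree $d$. Pull-back $g^*$ multiplies degrees by $d$. Moreover $g^*\colon \pic^0(E') \to \pic^0(E)$ is the dual isogeny, hence surjective. Pick any line bundle $L'_0$ of degree $-1$ on $E'$. Then $L \otimes g^*L_0'^{*}$ has degree $0$, so it equals $g^*N$ for some $N \in \pic^0(E')$. Setting $L' := L'_0 \otimes N$ gives $g^*L' \simeq L$.

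\emph{Step 2: the cover $g_M$.} Since $g^*(\sO_{E'} \oplus L') \simeq \sO_E \oplus L$, we get $M \simeq M' \times_{E'} E$. Hence $g_M$ is \'etale, and it is Galois with group $\Gamma$ acting by the lifted translations. We also have
$$
g_M^{-1}(C'_-) = \PP(g^*L') = C_-,
$$
and the pull-back of a fibre of $M' \to E'$ is $d$ fibres of $f$. Since $H^{1,1}$ of both surfaces is spanned by the negative section and a fibre class (cf.\ Setup \ref{setup1}), the map $g_M^*$ is an isomorphism on $H^{1,1}$. The deck transformations preserve $C_-$ and the fibre class, so they act trivially on $H^{1,1}(M)$.

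\emph{Step 3: descending the K\"ahler class.} Let $\omega$ be represented by a K\"ahler form, also denoted $\omega$. Define $\omega' := \frac1d (g_M)_*\omega$, the fibrewise trace over the \'etale cover. This is a smooth closed positive $(1,1)$-form, since it is a sum of positive forms in local inverse branches. It is therefore a K\"ahler form on $M'$. Its pull-back is
$$
g_M^*\omega' = \frac1d \sum_{\gamma \in \Gamma} \gamma^*\omega ,
$$
whose class equals $[\omega]$ by the triviality of the $\Gamma$-action on $H^{1,1}(M)$ from Step 2. This step is the main point to check carefully. If one prefers to avoid forms, an alternative is to write $\omega' = (g_M^*)^{-1}\omega$ and verify Kleiman-type positivity on the two-dimensional cone directly.

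\emph{Step 4: the canonical extensions.} Because $g_M$ is \'etale we have $T_M \simeq g_M^*T_{M'}$. Under $\mbox{Ext}^1(T_{M'},\sO_{M'}) \simeq H^1(M',\Omega_{M'})$, pull-back of extensions corresponds to pull-back of classes. Hence $g_M^*V^{\omega'} \simeq V^{\omega}$ compatibly with the sequences, which gives
$$
\PP(V^\omega) = \PP(V^{\omega'}) \times_{M'} M \qquad\text{and}\qquad \PP(T_M) = \PP(T_{M'}) \times_{M'} M .
$$
Restricting to the complements yields the \'etale cover $g_Z\colon Z_M^\omega \to Z_{M'}^{\omega'}$.
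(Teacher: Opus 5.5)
Your proof is correct, and it follows the paper's overall plan: take a degree-$d$ isogeny, descend $L$ using surjectivity of $g^*$ on the Picard variety, descend $\omega$, and pull back the extension. The paper uses $\pic^1(E')\to\pic^d(E)$ where you use the dual isogeny on $\pic^0$, but that is the same idea.

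The one genuine difference is in Step 3, the descent of the K\"ahler class.

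\emph{The paper's route.} It argues that $g_M^*$ is injective on $H^2$ because $M'$ is compact K\"ahler, and bijective by a dimension count. It then sets $\omega' := (g_M^*)^{-1}\omega$ and simply asserts that the K\"ahler property is invariant under finite maps.

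\emph{Your route.} You descend an actual K\"ahler form via the normalized trace $\frac1d (g_M)_*\omega$. This is visibly a closed positive $(1,1)$-form, since it is locally a sum of K\"ahler forms on the inverse branches. You then match classes using only the triviality of the $\Gamma$-action on $H^{1,1}(M)$, which you read off from the basis given by $C_-$ and a fibre.

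\emph{What each buys.} Your version makes the positivity of $\omega'$ self-evident and so avoids the unproved ``finite maps'' assertion. For a general real class that assertion is not a formality: here it could be justified, for instance, by Kleiman's criterion on the projective surface $M'$ together with the projection formula, as your suggested alternative indicates. The cost is that your argument relies on the Galois \'etale structure of $g_M$. The paper's argument is shorter and purely cohomological, but leaves the K\"ahler property of $\omega'$ to the reader.
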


\begin{proof} We write $\Lambda = \Z \oplus \Z \tau$ and set $\Lambda' := \Z \oplus \Z (\frac{1}{d}\tau)$.
Then $E' := \C/\Lambda'$ is an elliptic curve and the inclusion $\Lambda \subset \Lambda'$ induces an \'etale map
$$
\holom{g}{E}{E'}
$$
of degree $d$. The pull-back $\holom{g^*}{\pic^1(E') \simeq E'}{\pic^d(E) \simeq E}$ is not constant and therefore surjective. This shows the existence of $L'$. The pull-back
$$
\holom{g_M^*}{H^2(M', \C)}{H^2(M, \C)}
$$
is injective since $M'$ is compact K\"ahler. The two vector spaces have the same dimension, so 
we obtain $\omega'$ such that $\omega \simeq g^* \omega'$. The class $\omega'$ is K\"ahler since this property is invariant under finite maps. The statement  on the canonical extension is then clear: since $\omega \simeq g^* \omega'$,
the extension of vector bundles defined by $\omega$ is the pull-back of the extension of vector bundles defined by $\omega'$. 
\end{proof}

\begin{proof}[Proof of Theorem \ref{thm:not-Stein}]
We know by Lemma \ref{lemma:elliptic-easy} that
there exists an \'etale quotient 
$$
\holom{g_M}{M}{M'}
$$
such that $M' := \PP(\sO_{E'} \oplus L')$ with $L'$ of degree $-1$ and 
a K\"ahler class $\omega'$ on $M'$ such that
we have an \'etale cover
$$
\holom{g_Z}{Z_M^\omega}{Z_{M'}^{\omega'}}.
$$
By \cite[Lemma 2]{Nar62} it suffices to prove the statement for $Z_{M'}^{\omega'}$.
Now we conclude with  Theorem \ref{thm:degree-one}. 
\end{proof}


\newcommand{\etalchar}[1]{$^{#1}$}
\providecommand{\bysame}{\leavevmode\hbox to3em{\hrulefill}\thinspace}
\providecommand{\MR}{\relax\ifhmode\unskip\space\fi MR }
\providecommand{\MRhref}[2]{%
  \href{http://www.ams.org/mathscinet-getitem?mr=#1}{#2}
}
\providecommand{\href}[2]{#2}

\end{document}